\newtheorem{theorem}{Theorem}
\newtheorem{lemma}[theorem]{Lemma}
\newtheorem{corollary}[theorem]{Corollary}
\newtheorem{proposition}[theorem]{Proposition}
\newcommand{\Z}{{\mathbb Z}}
\newcommand{\ZZ}{{\mathbb Z}}
\newcommand{\NN}{{\mathbb N}}
\newcommand{\PP}{{\mathbb P}}
\renewcommand{\P}{{\mathbb P}}
\newcommand{\ind}{\mathbf{1}}
\newcommand{\cI}{\mathcal{I}}
\newcommand{\tT}{{\tilde{T}}}
\newcommand{\tA}{{\tilde{A}}}
\newcommand{\bigmid}{\,\big|\,}
\newcommand{\tor}{\stackrel{r}{\to}}
\newcommand{\Geom}{{\ensuremath{\text{Geom}}}}
\newcommand{\Zodd}{{\mathbb{Z}_{\text{\rm odd}}}}
\newcommand{\Zeven}{{\mathbb{Z}_{\text{\rm even}}}}
\newcommand{\comb}{\mathbb{K}}
\renewcommand{\tilde}{\widetilde}
\newcommand{\df}{\textbf}
\author{Alexander E.\ Holroyd}
\address[Alexander E.\ Holroyd]{\sloppypar Microsoft Research,
1 Microsoft Way, Redmond, WA 98052, USA}
\email{holroyd at microsoft.com}
\urladdr{\url{http://research.microsoft.com/~holroyd}}
\author{James Martin}
\address[James Martin]{\sloppypar University of Oxford,
Department of Statistics, 1 South Parks Road, Oxford OX1 3TG, UK}
\email{martin at stats.ox.ac.uk}
\urladdr{\url{http://www.stats.ox.ac.uk/~martin}}
\keywords{stochastic domination, percolation, comb graph,
Lipschitz embedding, first-passage percolation}
\subjclass[2010]{60K35; 82B43}
\title{Stochastic Domination and Comb Percolation}
\date{30 January 2012}
\begin{document}

\begin{abstract}
There exists a Lipschitz embedding of a $d$-dimensional
comb graph (consisting of infinitely many parallel copies
of $\Z^{d-1}$ joined by a perpendicular copy) into the open set
of site percolation on $\Z^d$, whenever the parameter $p$ is
close enough to $1$ or the Lipschitz constant is sufficiently large.
This is proved using several new results
and techniques involving stochastic domination, in contexts
that include a process of independent overlapping intervals on
$\Z$, and first-passage percolation on general graphs.
\end{abstract}

\maketitle

\section{Introduction}

The following natural generalization of percolation theory is
prompt\-ed by the results of \cite{DDGHS,GH-embed}. Let $G$ and
$H$ be graphs. For $p\in[0,1]$, consider the site percolation
model on $H$, in which each vertex is \df{open} with
probability $p$, and otherwise \df{closed}, independently for
different vertices. An \df{embedding} of $G$ in the open set of
$H$ is an injective map from the vertex set of $G$ to the set
of open vertices of $H$, such that neighbours in $G$ map to
neighbours in $H$. Define the critical probability
\begin{multline*}
p_c(G,H):=\\ \inf\Bigl\{p:\P\bigl(\exists \text{ an embedding
of $G$ in the open set of $H$}\bigr)>0\Bigr\}.
\end{multline*}

If $\Z_+$ is a singly-infinite path then $p_c(\Z_+,H)$ is simply the usual
critical probability $p_c(H)$ of site percolation on $H$ (see e.g.\ \cite{g2}
for background).  For the doubly-infinite path $\Z$, it was proved in
\cite[Proof of Theorem 3.9]{lyons} that $p_c(\Z,H)$ also equals $p_c(H)$ for
any infinite connected $H$.  Observe that if $G,H$ are subgraphs of $G',H'$
respectively then $p_c(G,H')\leq p_c(G',H)$.

We focus on the question: for which graphs is it the case that
$p_c(G,H)<1$? Let $\Z^d$ be the usual cubic lattice, with
vertex set also denoted $\Z^d$, and with vertices $x,y$ joined
by an edge whenever $\|x-y\|_1=1$.  Also let $\Z^d_{[M]}$
denote the spread-out lattice, in which vertices $x,y\in\Z^d$
are joined whenever $0<\|x-y\|_\infty\leq M$.  It was proved in
\cite{DDGHS} and \cite{GH-embed} respectively that
$p_c(\Z^{d-1},\Z^d_{[2]})<1$, while on the other hand
$p_c(\Z^d,\Z^d_{[M]})=1$ for all $M$.

An embedding of $G$ into $\Z^d_{[M]}$ may also be regarded as an
$M$-Lipschitz embedding of $G$ into $\Z^d$. In that language, the results
mentioned in the previous paragraph say that $M$-Lipschitz embeddings of
$\Z^{d-1}$ into $\Z^d$ are possible whenever $p$ or $M$ is large enough,
while Lipschitz embeddings of $\Z^d$ into $\Z^d$ are never possible for
$p<1$.

In this article we address a case lying between the last two mentioned above.
Define the $d$-dimensional \df{comb graph} $\comb^d$ to have vertex set
$\Z^d$, and edges $(z, z+e_i)$ for every $z\in\ZZ^d$ and all $i$ with $1\leq
i\leq d-1$, together with $(z, z+e_d)$ for all $z$ such that $z_1=0$ (where
$e_1, \dots, e_d$ are the standard basis vectors). Thus, $\comb^d$ consists
of a stack of parallel copies of $\ZZ^{d-1}$ (perpendicular to coordinate
$d$), connected by a single perpendicular copy of $\ZZ^{d-1}$ (perpendicular
to coordinate $1$). For $d>2$, $\comb^{d}$ is isomorphic to the product of
the $2$-dimensional comb $\comb^2$ with $\Z^{d-2}$. See Figure~\ref{radiator}
for illustrations of $\comb^2$ and $\comb^3$.
\begin{figure}
\centering
\hfill
\begin{tikzpicture}[thick,scale=0.75]
\tikzstyle{every node}=[shape=circle,fill=black,inner sep=0,minimum size=.17cm]
\foreach \x in {2,...,2} \draw (\x,-.3)--(\x,4.3);
\foreach \y in {0,...,4} \draw (-.3,\y)--(4.3,\y);
\foreach \x in {0,...,4} \foreach \y in {0,...,4} \node at (\x,\y) {};
\end{tikzpicture}
{}\hfill
\begin{tikzpicture}[x={(.9cm,-.04cm)},y={(.7cm,.2cm)},z={(0cm,1cm)},thick,scale=0.85]
\tikzstyle{every node}=[shape=circle,fill=black,inner sep=0,minimum
size=.17cm]
\newcommand{\drawbg}[2]
{\draw[white,line width=2.7pt,opacity=1.0]  (#1) -- (#2); }
\foreach \x in {0,...,0} \foreach \y in {0,...,3} \draw
(\x,\y,-.4)--(\x,\y,3.4);
\foreach \x in {0,...,3} \foreach \z in {0,...,3} \drawbg
{\x,-.4,\z}{\x,3.4,\z};
\foreach \z in {0,...,3} \foreach \y in {0,...,3} \drawbg
{-.4,\y,\z}{3.4,\y,\z};
\foreach \x in {0,...,3} \foreach \z in {0,...,3} \draw
(\x,-.4,\z)--(\x,3.4,\z);
\foreach \z in {0,...,3} \foreach \y in {0,...,3} \draw
(-.4,\y,\z)--(3.4,\y,\z);
\foreach \x in {0,...,3} \foreach \y in {0,...,3} \foreach \z in {0,...,3}
\node at (\x,\y,\z) {};
\end{tikzpicture}
\hfill {}
\caption{Part of the comb graph $\comb^d$ in dimension $d=2$
(left) and $d=3$ (right).} \label{radiator}
\end{figure}

\begin{theorem}[Comb percolation]\label{pc}
We have $p_c(\comb^d,\Z^d_{[2]})<1$ for all $d\geq 2$.
\end{theorem}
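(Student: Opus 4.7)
The plan is to construct a family $\{g_k\}_{k \in \Z}$ of $2$-Lipschitz functions $g_k : \Z^{d-1} \to \Z$ satisfying: (a) $(u, g_k(u))$ is open for every $u \in \Z^{d-1}$ and every $k \in \Z$; (b) pointwise strict monotonicity $g_k(u) < g_{k+1}(u)$; and (c) the spine-pinning condition $g_{k+1}(u) - g_k(u) \leq 2$ at every spine vertex $u = (0, u_2, \dots, u_{d-1}) \in \{0\} \times \Z^{d-2}$. The map $(u, k) \mapsto (u, g_k(u))$ is then an embedding of $\comb^d$ into the open set of $\Z^d_{[2]}$: the $2$-Lipschitz property of each $g_k$ handles the $e_1, \dots, e_{d-1}$-edges inside sheet $k$, condition (c) handles the $e_d$-edges along the spine, and (b) delivers injectivity between distinct sheets sharing a column.

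I would construct the family inductively from a base surface $g_0$, which exists for $p$ close to $1$ by the $\Z^{d-1}$-embedding result of \cite{DDGHS} and \cite{GH-embed}. Given $g_k$, I would reapply the same embedding theorem to the ``elevated'' random environment consisting of open sites in the region $\{(u, h) : h > g_k(u)\}$. The justification for this step---that the elevated environment is still favourable enough to embed---should come from a stochastic-domination argument of the sort developed in the paper, showing that, conditionally on $g_k$, the above-$g_k$ environment stochastically dominates an i.i.d.\ Bernoulli-$p'$ environment for some $p' = p'(p) \to 1$ as $p \to 1$. Running the induction upward from $g_0$ and, symmetrically, downward, produces the full doubly-infinite family satisfying (a) and (b).

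The principal obstacle is (c). For each spine vertex $u$, let $I_k(u) \subseteq \Z$ denote the (random) interval of heights at which $g_{k+1}$ could be pinned at $u$ and still be extendable to a $2$-Lipschitz surface, strictly above $g_k$, that passes through open sites everywhere. A first-passage-percolation estimate on $\Z^{d-1}$---the second new tool advertised in the abstract---should show that $I_k(u)$ contains a sub-interval of geometric-like length whose parameter tends to $0$ as $p \to 1$. The \emph{independent overlapping intervals on $\Z$} result is then used to route, for each fixed spine vertex $u$, a sequence of pins $h_k \in I_k(u)$ with $h_{k+1} - h_k \in \{1, 2\}$. The hardest step is the final gluing: weaving these per-spine-vertex routings into a single coherent pair $(g_k, g_{k+1})$ that is globally $2$-Lipschitz on $\Z^{d-1}$ and passes through open sites above $g_k$. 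Controlling the spatial correlations between the intervals $I_k(u)$ at different spine vertices, and reconciling them with the vertical dependencies between successive $k$, is where the paper's combined stochastic-domination machinery must be brought to bear.
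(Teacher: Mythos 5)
Your overall plan — a bi-infinite stack of Lipschitz sheets through open sites, controlled via the paper's two domination tools — points in the right direction, but the embedding you write down cannot exist. With $f(u,k)=(u,g_k(u))$, the image of the spine vertex $(0,u_2,\dots,u_{d-1},k)$ is $(0,u_2,\dots,u_{d-1},g_k(0,u_2,\dots,u_{d-1}))$, so for fixed $(u_2,\dots,u_{d-1})$ the entire spine maps into the single vertical line $\{(0,u_2,\dots,u_{d-1})\}\times\Z$. Conditions (a)--(c) then demand an increasing bi-infinite sequence of open heights on that line with consecutive gaps at most $2$, which is equivalent to the line having no run of two consecutive closed sites. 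For any $p<1$ such runs occur almost surely (infinitely often), so no family $\{g_k\}$ satisfying (a)--(c) exists. The per-column routing $h_{k+1}-h_k\in\{1,2\}$ that you try to extract from the $I_k(u)$'s is blocked outright by any such run, no matter how the intervals are estimated; this is not a ``gluing'' difficulty to be finessed later but a hard obstruction at the very first column you examine. (The paper even notes that for the same reason $p_c(\comb^2,\Z^2)=1$: a rigid spine is fatal.)

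The missing ingredient is precisely the paper's \emph{perpendicular} Lipschitz surface (Theorem~\ref{backbone}): the embedded spine must be allowed to wander in the $e_1$-direction, and this wandering is itself a Lipschitz function $H$ of the remaining coordinates. The paper first builds the full stack $L_n$ in one shot (minimally, via $\Lambda$-paths, not by induction — which also avoids the conditioning issues in your ``elevated environment'' step), then calls $(x,2n+1)$ \emph{good} when $L_n(x)=2n+1$ attains its floor, so that at a good site consecutive sheets sit at heights exactly $2n+1$ and $2n+3$. It then constructs a Lipschitz $H$ whose graph visits only good sites, and the embedding shifts the first coordinate by $H$, so the spine image is the graph of $H$ rather than a straight line. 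The two domination results you cite are deployed exactly here: obstacles (components of bad sites seen from a source $y$) are dominated by independent copies via the first-passage-percolation theorem, their geometric radii are converted from balls to vertical sticks by Lemma~\ref{sticklemma}, and the one-dimensional overlapping-interval theorem dominates the union of sticks by i.i.d.\ percolation, after which Theorem~\ref{stack} (applied with open/closed swapped) produces $H$. If you replace your fixed-spine embedding by one whose first coordinate is translated by such an $H$, and replace the inductive construction of sheets by the global $\Lambda$-path construction, you recover the paper's argument.
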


\begin{corollary}\label{large-m}
For all $d\geq 2$ we have $p_c(\comb^d,\Z^d_{[M]})\to 0$ as
$M\to\infty$.
\end{corollary}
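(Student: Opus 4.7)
The plan is to deduce this from Theorem \ref{pc} via a standard block/renormalization argument: given any $p>0$, I would build a Lipschitz embedding into $\Z^d_{[M]}$ (for suitable large $M=M(p)$) by embedding $\comb^d$ into a coarse-grained lattice whose sites represent blocks of side $L$, and then refining to actual open vertices.

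Concretely, let $p^*:=p_c(\comb^d,\Z^d_{[2]})$, which is strictly less than $1$ by Theorem \ref{pc}. Partition $\Z^d$ into disjoint cubic blocks $B_z:=Lz+[0,L)^d\cap\Z^d$ indexed by $z\in\Z^d$. Declare block $B_z$ \emph{good} if it contains at least one open vertex. In the site percolation model with parameter $p$ the goodness indicators are i.i.d.\ Bernoulli with parameter $q_L(p)=1-(1-p)^{L^d}$. Since $q_L(p)\to 1$ as $L\to\infty$, I can choose $L=L(p)$ large enough that $q_L(p)>p^*$.

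Viewing the collection of good blocks as a site percolation on $\Z^d$ with density exceeding $p^*$, the definition of $p^*$ guarantees that, with positive probability, there is a Lipschitz embedding $\phi:\comb^d\to\Z^d_{[2]}$ into the good sites. On this event I would define $\psi(v)$ to be an arbitrary (say lexicographically minimal) open vertex of $B_{\phi(v)}$, for each $v\in\comb^d$. Injectivity of $\psi$ is automatic because distinct blocks are disjoint. For adjacent $v\sim w$ in $\comb^d$ one has $\|\phi(v)-\phi(w)\|_\infty\le 2$, hence
\[
\|\psi(v)-\psi(w)\|_\infty \;\le\; 2L+(L-1)\;=\;3L-1,
\]
so $\psi$ is an embedding of $\comb^d$ into the open set of $\Z^d_{[3L]}$. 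This shows $p_c(\comb^d,\Z^d_{[3L(p)]})\le p$, and letting $p\downarrow 0$ forces $L(p)\to\infty$, giving the claimed limit.

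I do not anticipate any real obstacle: this is a routine coarse-graining argument that trades smaller occupation density $p$ for a larger Lipschitz constant $M$ proportional to the block size. The only small points to verify carefully are (i) independence of the block-goodness events (immediate from disjointness), (ii) that the refinement from $\phi$ to $\psi$ increases the $\ell_\infty$-step by at most an additive $L-1$ per coordinate, and (iii) the monotone quantitative dependence $L(p)=\lceil(\log(1-p^*)/\log(1-p))^{1/d}\rceil$, which makes explicit that $M$ need only grow polylogarithmically in $1/p$.
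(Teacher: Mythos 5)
Your argument is the same coarse-graining that the paper uses: blocks of side $L$ are ``occupied''/``good'' if they contain an open site, an embedding into $\Z^d_{[2]}$ of the block process is refined by picking one open vertex per block, and the $\ell_\infty$-step inflates to at most $2L+(L-1)=3L-1$. The only discrepancy is your parenthetical quantitative claim in point (iii): since $L^d \gtrsim \log\bigl(1/(1-p^*)\bigr)/p$ as $p\downarrow 0$, the required $M\approx 3L$ grows like $p^{-1/d}$, i.e.\ \emph{polynomially} in $1/p$, not polylogarithmically; this does not affect the validity of the corollary.
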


Our proof gives an explicit upper bound for $p_c(\comb^d,\Z^d_{[2]})$, but we
have not attempted to optimize it.  The spread-out lattice $\Z^d_{[2]}$ in
Theorem~\ref{pc} cannot be replaced with the nearest-neighbour lattice
$\Z^d$.  Indeed, it was proved in \cite{GH-embed} that $p_c(\Z^2,\Z^d)=1$ for
all $d\geq 2$; since $\Z^2$ is a subgraph of $\comb^d$ this implies
$p_c(\comb^d,\Z^d)=1$ for $d\geq 3$.  It is also easy to see that
$p_c(\comb^2,\Z^2)=1$, since the backbone of $\comb^2$ would have to be
embedded as a straight line in $\Z^2$.  On the other hand, our techniques may
be adapted to prove $p_c(\comb^d,H)<1$ for some graphs $H$ with edge sets
intermediate between those of $\Z^d$ and $\Z^d_{[2]}$ -- in particular it
seems plausible that this could be done for the ``star lattice''
$\Z^d_{[1]}$, but we have not pursued this. Such questions reflect details of
the local lattice geometry, whereas the fact that $p_c(\comb^d,\Z^d_{[M]})<1$
for large enough $M$ (as implied by Theorem \ref{pc}) is more fundamental.

Our proof of Theorem \ref{pc} will make use of several new
results and techniques involving stochastic domination, which
we believe are of independent interest and wider applicability.
Stochastic domination by i.i.d.\ processes is a powerful
technique for proving results of this kind, because it enables
facts proved for the i.i.d.\ case to be transferred to other
settings.  One widely used tool is the result of \cite{LSS}
that a $k$-dependent Bernoulli process with sufficiently high
marginals dominates any given i.i.d.\ product measure.
However, the key process that we will need to control (of ``bad
points'') is not $k$-dependent, and in fact is not dominated by
any product measure.  Therefore the methods we use are of a
different nature.

Background on stochastic domination may be found in \cite[Ch.\ II,
\S2]{liggett}, for example. For our purposes, the following definition via
coupling will suffice. Let $X$ and $Y$ be random variables taking values in
the same partially ordered space. Then we say that $X$ \df{stochastically
dominates} $Y$ if there exist $X',Y'$ on some probability space with $X'$ and
$X$ equal in law, $Y'$ and $Y$ equal in law, and $X'\geq Y'$ almost surely.
The underlying partial order will be inclusion (in the case of random sets)
or pointwise ordering (for real functions).

Our first tool is a simple but useful stochastic domination
result on overlapping intervals in a one-dimensional setting.
For $c\in(0,1)$, say that a random variable $X$ has
\df{geometric distribution} with parameter $c$, denoted
$\Geom(c)$, if $\P(X=r)=(1-c)c^r$ for $r=0,1,2,\dots$. (Note
that the value $0$ is included, and that $c$ is the probability
of a ``failure'' rather than a ``success'').  In the following,
the interval $(a,b)$ is taken to be empty if $a=b$.
\begin{theorem}[One-dimensional domination]\label{1d}\sloppypar
Let $(G_n)_{n\in\Z}$ be i.i.d.\ $\Geom(c)$ random variables.
The random set $\Z\cap\bigcup_{n\in\Z}\bigl(n-G_n,n+G_n\bigr)$
 is stochastically
dominated by the open set of i.i.d.\ site percolation on $\Z$
with parameter $\min(4\surd c,1)$.
\end{theorem}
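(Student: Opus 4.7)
The plan is to decouple each symmetric interval into two one-sided pieces that become stochastically independent, and then to dominate each piece separately by an i.i.d.\ Bernoulli field.

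Since $\P(\min(L,R)\geq r)=(\sqrt c)^{2r}=c^r$ when $L,R\sim\Geom(\sqrt c)$ are independent, I would couple so that $G_n=\min(L_n,R_n)$; this gives $(n-G_n,n+G_n)\subseteq(n-L_n,n+R_n)$, hence $S\subseteq\tilde S:=\Z\cap\bigcup_n(n-L_n,n+R_n)$. Splitting each enlarged interval at the point $n$ yields $\tilde S=\tilde S^L\cup\tilde S^R$ where $\tilde S^L:=\Z\cap\bigcup_n(n-L_n,n]$ depends only on the $L_n$'s and $\tilde S^R:=\Z\cap\bigcup_n[n,n+R_n)$ only on the $R_n$'s; consequently $\tilde S^L$ and $\tilde S^R$ are independent.

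By symmetry the proof reduces to the one-sided key lemma that $\tilde S^R$ is stochastically dominated by i.i.d.\ Bernoulli site percolation on $\Z$ with parameter $2\sqrt c$. Granted this (and its mirror image for $\tilde S^L$), the independence of the two halves lets us couple the two dominating fields to be themselves independent; their union is i.i.d.\ Bernoulli with parameter $1-(1-2\sqrt c)^2=4\sqrt c-4c$, which is dominated by Bernoulli$(\min(4\sqrt c,1))$ as required.

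To prove the key lemma, note first that a direct Liggett--Schonmann--Stacey-style bound on $\P(k\in\tilde S^R\mid\text{past})$ cannot succeed, since whenever some $R_n$ with $n<k$ happens to be large, the site $k$ is \emph{forced} into $\tilde S^R$ deterministically. The approach would be to decompose each $R_n=B_n(1+N_n)$ with $B_n\sim\mathrm{Bernoulli}(\sqrt c)$ and $N_n\sim\Geom(\sqrt c)$ independent, giving $\tilde S^R\subseteq H\cup T$ where $H:=\{n:B_n=1\}$ is already i.i.d.\ Bernoulli$(\sqrt c)$, and $T:=\bigcup_{n:B_n=1}\{n+1,\dots,n+N_n\}$ carries all the long runs (with marginal density only of order $c$). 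The heart of the technical work, and the main obstacle, is then to dominate $T$ by a Bernoulli$(\sqrt c)$ field that can be arranged to be \emph{independent of $H$}; only then do the two pieces combine into a Bernoulli$(2\sqrt c-c)$ field covering $\tilde S^R$. Since $T$ is entangled with $H$ through the shared variables $(B_n)$, the coupling must exploit the $N_n$-randomness (plus auxiliary independent randomness) to build the dominating field while still guaranteeing pointwise coverage of $T$, absorbing the geometric clustering produced by the large tails into the single remaining factor of $\sqrt c$ in the target parameter.
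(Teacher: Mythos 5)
Your reduction step is exactly the paper's: set $G_n=\min(L_n,R_n)$ with $L_n,R_n$ i.i.d.\ $\Geom(\sqrt c)$, bound the symmetric interval by a left-open-on-the-right piece and a right-open-on-the-left piece, note these two unions are independent, and combine dominating Bernoulli$(2\sqrt c)$ fields into a Bernoulli$(1-(1-2\sqrt c)^2)$ field. So far, so good; this is Proposition~\ref{1side} plus the final two lines of the paper's proof of Theorem~\ref{1d}.

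The gap is in the one-sided key lemma, which you do not prove. Worse, your preliminary remark that ``a direct Liggett--Schonmann--Stacey-style bound on $\P(k\in\tilde S^R\mid\text{past})$ cannot succeed'' rests on a confusion about what ``past'' should mean. It is true that conditioning on the underlying variables $(R_n)_{n<k}$ is hopeless, since one large $R_n$ deterministically forces $B_k=1$. But the relevant conditioning for a Russo-type domination criterion is on the \emph{past of the indicator process} $(B_j)_{j<k}$, and for that $\sigma$-algebra the conditional probability \emph{is} uniformly small: the paper proves $\P(B_0=1\mid(B_j)_{j<0})\le 2c$ a.s. The mechanism is a queueing interpretation together with the auxiliary variable $N=\max\{n\ge 0: G_{-n}\ge n\}$ (the age of the oldest surviving customer), for which one has the conditional-independence identity $\P(B_0=1\mid(B_j)_{j<0},N)=\P(B_0=1\mid N)$, followed by a direct calculation that $\P(B_0=1\mid N=n)\le c+(1-c)(c+\dots+c^n)\le 2c$. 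Observing past $B_j$'s never tells you whether the run you are in will continue, so there is no deterministic forcing at this level of conditioning. By dismissing this route you walked away from the argument that works.

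Your alternative route via $R_n=B_n(1+N_n)$, $H:=\{n:B_n=1\}$ and $T:=\bigcup_{n:B_n=1}\{n+1,\dots,n+N_n\}$ is a valid decomposition with $\tilde S^R=H\cup T$ and $H$ genuinely i.i.d.\ Bernoulli$(\sqrt c)$. But the step you flag as ``the heart of the technical work'' --- dominating $T$ by a Bernoulli$(\sqrt c)$ field \emph{independent of} $H$ --- is not a routine completion; it is the whole content of the lemma, and it is not clear it is even achievable in the form stated. Conditionally on $(B_n)$, the law of $T$ changes substantially (a long block of $B_n=1$ makes $T$ locally much denser), so there is no single Bernoulli field, independent of $H$, under which $T$ is a.s.\ covered. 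You would have to allow the dominating field to be correlated with $H$ and then control the union directly, at which point you have lost the one advantage of the decomposition. In short: the proof of the one-sided lemma is missing, the proposed plan to prove it has an unaddressed obstruction, and the stated reason for rejecting the conditioning approach (which the paper actually uses) is incorrect.
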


Our second tool concerns first-passage percolation.  As we
explain in Section~\ref{outline}, it can be regarded as
unifying and generalizing ideas in
\cite{BaccelliFoss,fontes-newman,GH-lip}.
 Let $V$ be a countable vertex set.  For every
pair of distinct vertices $x,y\in V$, the directed edge
$e=(x,y)$ is assigned a random \df{passage time} $W(e)=W(x,y)$
taking values in $[0,\infty]$. The passage times of different
edges are independent but not necessarily identically
distributed.  (We can model a process on a graph other than the
complete graph by taking some passage times to be $\infty$
almost surely.)  In addition, each vertex $x\in V$ has a
deterministic \df{source time} $t(x)\in(-\infty,\infty]$ at
which it is ``switched on''. (For example, to model growth
started at a single source $a$ we would take $t(a)=0$ and
$t(x)=\infty$ for all other $x$.) The \df{occupation time} of
$x\in V$ is the time it is first reached:
\[
T(x):=\inf_{
\substack{y_0,y_1,\ldots, y_m: \\ y_m=x}}
\biggl\{
t(y_0) + \sum_{k=1}^{m} W(y_{k-1}, y_k)
\biggr\}.
\]

We now consider a collection of countably many models on the
same vertex set, indexed by $i\in\cI$. Different models have
identically distributed passage times, and are independent of
each other, but may have different source times.  Write
$t_i(x)$ for the source time of $x$ in model $i$,
%and $W_i(e)$ for the passage time of edge $e$,
and $T_i(x)$ for the occupation time of vertex $x$ in this model.
Let
$$\tT(x):=\inf_i T_i(x).$$
Finally, consider another model with source times given by
$$t(x):=\inf_i t_i(x),\quad x\in V,$$
and with the same passage time distributions as the other models.
Write $T(x)$ for the occupation time of $x$ in this model.

\begin{theorem}[First-passage percolation domination]\label{FPP}
Under the above assumptions, $(\tT(x))_{x\in V}$ is
stochastically dominated by $(T(x))_{x\in V}$.
\end{theorem}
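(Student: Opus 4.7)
The plan is to construct an explicit coupling via a Dijkstra-type sequential exploration of the combined model $T$. The source times $t_i(x)$ are deterministic; I generate the passage times $\{W_i(e)\}_{i\in\cI,\,e}$ as independent random variables with the given (common) marginal law $\mu$, and from these compute $T_i$ and hence $\tilde T=\inf_i T_i$. I will simultaneously \emph{define} the combined-model passage times $W(e)$ and compute $T(x)$. Initialise $T(v):=\inf_i t_i(v)$ and pick an index $i^{*}(v)\in\cI$ achieving this infimum; set $S:=\emptyset$. Iteratively pop $v=\arg\min_{w\notin S}T(w)$, add it to $S$, and for each directed edge $(v,w)$ with $w\notin S$ set $W(v,w):=W_{i^{*}(v)}(v,w)$; whenever $T(v)+W(v,w)<T(w)$, update $T(w):=T(v)+W(v,w)$ and $i^{*}(w):=i^{*}(v)$. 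Edges never touched by this exploration are assigned independent fresh $\mu$-samples at the end.

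There are two things to check. \emph{First}, $(W(e))_e$ is iid with marginal $\mu$. At the moment $W(v,w)$ is defined, the index $i^{*}(v)$ is measurable with respect to previously revealed data, while $\{W_i(v,w)\}_{i\in\cI}$ is independent of that past: the edge $(v,w)$ has not appeared in any earlier reveal, and each earlier $i^{*}(u)$ traces back only through predecessor edges incident to vertices already in $S$, none of which is $(v,w)$. Hence $W(v,w)=W_{i^{*}(v)}(v,w)$ has conditional law $\mu$ given the past, which yields joint iid-ness along the reveal order.

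\emph{Second}, $T(v)\geq\tilde T(v)$ for every $v$. I maintain the invariant $T_{i^{*}(v)}(v)\leq T(v)$ throughout the exploration. It holds at initialisation since $T_i(v)\leq t_i(v)$. When $T(w)$ is updated via an edge $(v,w)$ with $i^{*}(w):=i^{*}(v)$,
\[
T_{i^{*}(w)}(w)=T_{i^{*}(v)}(w)\leq T_{i^{*}(v)}(v)+W_{i^{*}(v)}(v,w)\leq T(v)+W_{i^{*}(v)}(v,w)=T(w),
\]
using the inductive hypothesis at $v$. Therefore $\tilde T(v)\leq T_{i^{*}(v)}(v)\leq T(v)$.

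The main obstacle will be making this construction rigorous in the full generality of the statement: an arbitrary countable $V$, a countable but possibly infinite $\cI$, source times in $(-\infty,\infty]$ so that some vertices have $T(v)=\tilde T(v)=\infty$ and are never popped, infima that may not be attained, and ties in the Dijkstra ordering. I expect these issues to be dispatched by first reducing to finite truncations (exploiting monotonicity of $T$ and $\tilde T$ in the underlying passage- and source-times) and then passing to the limit.
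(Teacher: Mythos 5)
Your proposal is correct for the finite case and, as you acknowledge, the extension to infinite $V$ and $\cI$ (and to non-attained infima and ties) is handled by the standard finite truncation and weak-convergence argument -- which is exactly what the paper does. The overall strategy is shared with the paper: define the combined-model passage times sequentially via a Dijkstra-type exploration, argue that each revealed value has conditional law $\mu$ because the choice of index is measurable with respect to the past while the edge variable is still fresh, and prove $\tilde T\le T$ by an inductive comparison. Where you genuinely differ is in the coupling and the bookkeeping. The paper sets $W(x,y):=W_i(x,y)$ with $i=\arg\min_j T_j(x)$, the model in which $x$ is reached first, so that $W$ is an explicit function of the family $(W_i)$; it then proves $\tilde T\le T$ by tracing a $T$-geodesic and checking the inequality at each step, and establishes independence by running \emph{all} the individual models' explorations in parallel and at each step examining the vertex-model pair with the smallest current label. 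You instead run a \emph{single} Dijkstra exploration of the combined model, threading a source label $i^*(v)$ along its shortest-path tree and setting $W(v,w):=W_{i^*(v)}(v,w)$; your invariant $T_{i^*(v)}(v)\le T(v)$ along the tree then does the work of the paper's geodesic induction. Your $i^*(v)$ is in general \emph{not} the minimizer of $T_j(v)$ -- it is whichever source reaches $v$ first via the combined-model shortest-path tree -- so the two couplings really do differ (and yours also uses some extra fresh randomness on untouched edges), but both are valid. Your version has the virtue of needing only one exploration; the paper's has the virtue of a more explicit characterization of $W$. The one-step freshness observation that powers the independence claim is essentially identical in both.
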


We prove Theorems \ref{1d} and \ref{FPP} at the end of the
article. In the next section we explain how these results are
used in the proof of Theorem~\ref{pc}.

\section{Outline of Proof}
\label{outline}

In this section we explain the main ideas behind the proof of
Theorem~\ref{pc}.  Our starting point is the following
strengthening of a result of \cite{DDGHS} (the latter has been
applied in \cite{DDS,GH-sphere}, and extended in other
directions in \cite{GH-lip}). For
$x=(x_1,\ldots,x_{d-1})\in\Z^{d-1}$ and $z\in\Z$ we denote
their concatenation thus:
$(x,z):=(x_1,\ldots,x_{d-1},z)\in\Z^d$.  Vertices of $\Z^d$ will sometimes be called sites.

\newpage
\begin{theorem}[Stacked Lipschitz surfaces]\label{stack}
Consider site percolation on $\Z^d$ with $d\geq 2$.  If the
parameter $p$ is sufficiently close to $1$ then a.s.\ there
exist (random) functions $L_n:\Z^{d-1}\to\Z$, indexed by
$n\in\Z$, with the following properties.
\begin{subequations}
\begin{align}
\label{Lcond1}
&\text{The site $(x, L_n(x))\in\ZZ^d$ is open for all $x\in\ZZ^{d-1}$
 and $n\in\ZZ$.}\\
\label{Lcond2}
&\text{For each $n$, the function $L_n$ is $1$-Lipschitz in the sense that}\\
\nonumber
&\hspace{2cm}|L_n(x)-L_n(x')|\leq 1 \text{ whenever } \|x-x'\|_\infty = 1.\\
\label{Lcond3}
&\text{$L_n(x)>2n$ for all $x$ and $n$.}\\
\label{Lcond4}
&\text{$L_{n-1}(x)<L_{n}(x)$ for all $x$ and $n$.}
\end{align}
\end{subequations}
\end{theorem}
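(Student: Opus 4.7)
The plan is to construct the surfaces $L_n$ inductively in $n$, exploiting the closure of the set $\mathcal{S}$ of $1$-Lipschitz integer-valued functions $L:\Z^{d-1}\to\Z$ with $(x,L(x))$ open for every $x$ under pointwise infima of families bounded below. (If $L=\inf_\alpha L^\alpha$ for $L^\alpha\in\mathcal{S}$ bounded below, then $L$ is $1$-Lipschitz as the infimum of $1$-Lipschitz functions, and $L(x)$ is attained by some $L^{\alpha(x)}(x)$ because the values are integers bounded below, so $(x,L(x))$ is open and $L\in\mathcal{S}$.) Defining
\[
L_0 := \inf\{L\in\mathcal{S}:L\ge 1\}, \qquad L_n := \inf\bigl\{L\in\mathcal{S}:L\ge F_n\bigr\} \text{ for } n\ge 1,
\]
with $F_n(x):=\max\{2n+1,\, L_{n-1}(x)+1\}$ (which is $1$-Lipschitz as a pointwise maximum of $1$-Lipschitz functions), the conditions \eqref{Lcond2}, \eqref{Lcond3} and \eqref{Lcond4} follow automatically. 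What remains is to show each infimum is taken over an almost surely nonempty set, which will deliver \eqref{Lcond1}.

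Almost sure nonemptiness of $\{L\in\mathcal{S}:L\ge 1\}$ for $p$ sufficiently close to $1$ is the DDGHS result that Theorem~\ref{stack} strengthens. For the inductive step I would use a conditioning argument: conditionally on $L_{n-1}$, the percolation on sites strictly above $L_{n-1}$ remains i.i.d.\ Bernoulli$(p)$. This is a ``stopping surface'' statement analogous to the strong Markov property, and should be provable from the fact that $L_{n-1}$ is built from below as a pointwise infimum of elements of $\mathcal{S}$, so the event $\{L_{n-1}(x)\le z\}$ is measurable with respect to the percolation on sites $\{(y,w):w\le z+\|y-x\|_\infty\}$. Given such independence, the inductive step reduces to finding a member of $\mathcal{S}$ above the $1$-Lipschitz floor $F_n$ in a fresh i.i.d.\ Bernoulli$(p)$ environment. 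After the change of variables $(x,z)\mapsto(x,z-L_{n-1}(x)-1)$ that flattens the obstruction from below, this becomes the task of finding an open surface of Lipschitz constant $2$ lying at height $\ge 0$, a mild extension of DDGHS handled by the same renormalisation argument.

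The principal obstacle is the stopping-surface conditioning step, since the natural order on $\Z^{d-1}\times\Z$ is only partial and a strong Markov property in this setting needs careful setup; one route is to approximate $L_{n-1}$ by infima over finite subfamilies of $\mathcal{S}$, apply an ordinary optional-sampling argument to the approximants, and take a limit. Once that is in hand, a Borel--Cantelli or union-bound argument, using uniform-in-$n$ tail control on $L_n-F_n$ supplied by the DDGHS estimate, yields the full doubly-infinite stack. The negative-$n$ side can be constructed symmetrically using pointwise suprema in place of infima, or by building finite stacks $(L_n)_{|n|\le N}$ and coupling them consistently as $N\to\infty$ via translation invariance of the environment.
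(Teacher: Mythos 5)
Your approach is genuinely different from the paper's (which defines all the surfaces at once as duals of ``$\Lambda$-paths'' and proves finiteness by a single path-counting estimate), and there is a real gap in it beyond the stopping-surface step you flag. The problem is the negative-$n$ side. Your $L_0:=\inf\{L\in\mathcal{S}:L\ge 1\}$ is determined entirely by sites at heights $\ge 1$, so it is perfectly possible that $L_0(x)=1$ while both $(x,0)$ and $(x,-1)$ are closed. But then \emph{no} $L_{-1}$ can satisfy \eqref{Lcond1}, \eqref{Lcond3} and \eqref{Lcond4}: those conditions force $L_{-1}(x)\in\{-1,0\}$ with $(x,L_{-1}(x))$ open. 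This happens a.s.\ at infinitely many $x$, so the ``symmetric suprema'' route fails outright. The lesson is that the correct $L_0$ must already ``feel'' the obstructions below it; the paper's path-dual construction does this automatically because $S_0$ is built from $\Lambda$-paths starting at arbitrarily negative heights. Your alternative route (finite stacks $(L_n^{(N)})_{n\ge N}$ and let $N\to-\infty$) is sound in spirit and is in fact essentially what the paper's definition of $S_n$ encodes, but it requires a uniform-in-$N$ tail bound on $L_n^{(N)}(x)$ to show that the monotone limit is a.s.\ finite. Your proposal attributes this to ``uniform-in-$n$ tail control on $L_n-F_n$ supplied by the DDGHS estimate,'' but the per-step DDGHS estimate controls $L_n-F_n$, not $L_n-(2n+1)$; the accumulated increments $L_n - 2n$ form a reflected random walk whose tightness needs an explicit quantitative argument (e.g.\ a drift estimate for $p$ near $1$). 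That argument is precisely the content of the paper's exponential-tail computation, and is the part your proposal omits.

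Two smaller issues: the nonemptiness of $\{L\in\mathcal{S}:L\ge 1\}$ is not literally the DDGHS theorem, since DDGHS works with $\ell^1$-Lipschitz surfaces while the present theorem requires $\ell^\infty$-Lipschitz; the renormalisation argument adapts, but it is not a citation. And the stopping-surface strong Markov property is plausibly true for your infimum-defined $L_{n-1}$ (the right formulation is $\{L_{n-1}\le\ell\}\in\sigma(\omega_{(y,w)}:w\le\ell(y))$ for deterministic $1$-Lipschitz $\ell$, which does hold for the minimal surface), but as you note this needs a careful spatial-Markov setup. By contrast, the paper's direct path-dual definition of $L_n$ sidesteps all of these issues simultaneously: there is no induction, no conditioning, no limit over $N$, and the a.s.\ finiteness and all four properties follow from one explicit definition and one expected-number-of-paths bound.
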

For each $n$, the graph $\big\{(x,L_n(x)): x\in\ZZ^{d-1}\big\}$
of $L_n$ is a ``Lipschitz surface'', and Theorem~\ref{stack}
asserts the existence of an ordered stack of disjoint open
Lipschitz surfaces.  See Figure~\ref{all-2d}. This strengthens
the result of \cite{DDGHS} that one such surface exists for $p$
sufficiently close to $1$.  Our Lipschitz surfaces differ from
those in \cite{DDGHS,GH-lip} in that we use the $\infty$-norm
rather than the $1$-norm in \eqref{Lcond2} -- this is
relatively unimportant, but will be convenient for our
construction.  The condition \eqref{Lcond3} will be helpful in
keeping track of the typical position of each surface.
\begin{figure}
\noindent\makebox[\textwidth]{%
\includegraphics[width=0.7\textwidth]{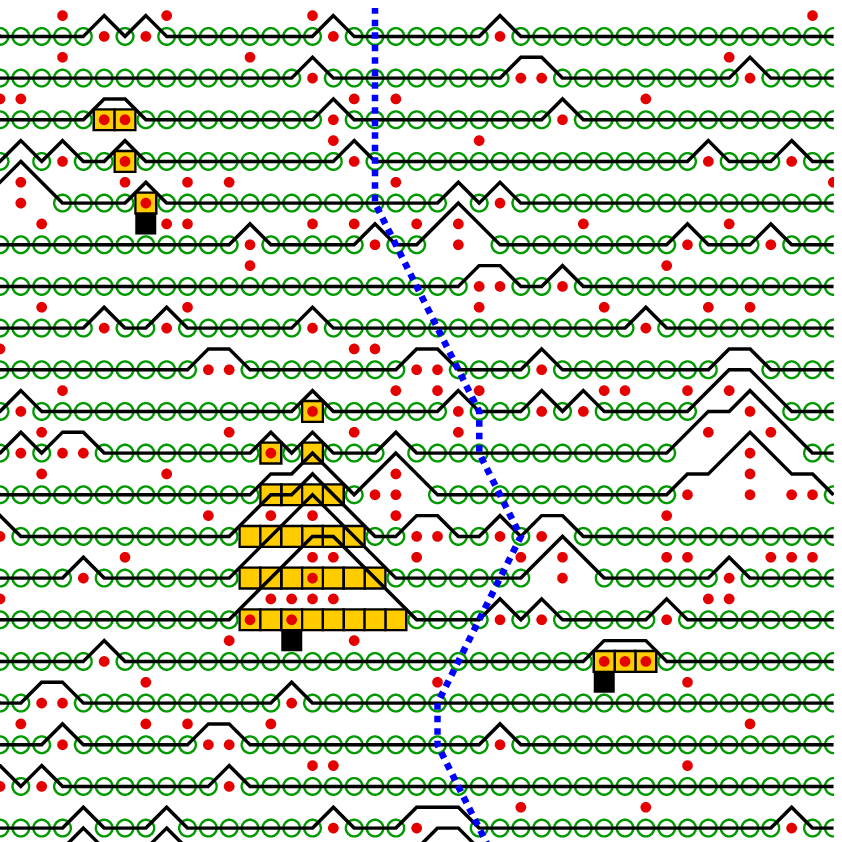}
\begin{minipage}[b][0.7\textwidth][c]{4.3cm}
\begin{tabularx}{4.3cm}{c>{\RaggedRight}X}
\includegraphics{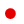}&closed sites;\\[1ex]
\includegraphics{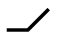}
&stacked Lipschitz surfaces $L_n$ avoiding closed sites;\\[1ex]
\includegraphics{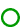}
&good sites (where $L_n$ is as low as possible);\\[1ex]
\raisebox{-1ex}{\includegraphics{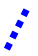}}
&perpendicular Lipschitz surface $H$ avoiding bad sites;\\[1ex]
\includegraphics{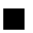}&three selected sites $y$;\\[1ex]
\includegraphics{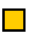}&obstacles $A_y$ at those sites $y$.
\end{tabularx}
\end{minipage}
\hspace{-6mm}\ }
\caption{The main objects used in the construction of the embedding.
Here $d=2$ and $p=0.885$.}
\label{all-2d}
\end{figure}

In proving Theorem \ref{stack} we will define a particular
family of functions $(L_n)$ having additional desirable
properties.  In fact, $(L_n)$ will be the minimal family
satisfying \eqref{Lcond1}--\eqref{Lcond4} in the sense that for
any other such family $(L_n')$ we have $L_n(x)\leq L_n'(x)$ for
all $x,n$.

Our aim is to weave these Lipschitz surfaces together using
another Lipschitz surface perpendicular to the stack. Observe
that the minimum possible value of $L_n(x)$ is $2n+1$.  We pay
particular attention to those positions where this minimum is
attained. Let $\Zeven$ be the set of even integers and $\Zodd$
the set of odd integers. We call the site
$(x,2n+1)\in\ZZ^{d-1}\times\Zodd$ \df{good} if $L_n(x)=2n+1$,
and otherwise \df{bad}. Note that these definitions apply only
to sites whose last coordinate is odd, and depend on the choice
of the functions $L_n$.  Since $(x,L_n(x))$ is always open,
every good site is open.

\begin{theorem}[Perpendicular Lipschitz surface]\label{backbone}
Fix $d\geq 2$.  For $p$ sufficiently close to $1$, the
functions $L_n$ of Theorem ~\ref{stack} may be chosen so that
almost surely there exists a function
$H:\Z^{d-2}\times\Zodd\to\Z$ with the following properties.
\begin{subequations}
\begin{align}
\label{Hcond1}
&\text{The site }
(H(u), u)%\in\ZZ^{d-1}\times\Zodd
\text{ is good for all }u\in\ZZ^{d-2}\times\Zodd.\\
\label{Hcond2}
&
|H(u)-H(u')|\leq 1
\text{ whenever:} \\
&\hspace{2cm} |u_{d-1}-u'_{d-1}|\leq 2,
\text{ and }|u_i-u'_{i}|\leq 1\text{ for }i\leq d-2.\nonumber
\end{align}
\end{subequations}
\end{theorem}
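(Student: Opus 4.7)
The plan is to construct $H$ as a Lipschitz surface (in the standard $\infty$-metric after the reindexing $u_{d-1}=2m+1$) whose graph lies in the good-site set, and to establish existence by a Peierls/contour argument that exploits the two stochastic-domination tools above. After this reindexing, condition \eqref{Hcond2} is the usual $1$-Lipschitz condition on $\Z^{d-2}\times\Z$, so I am looking for an $\infty$-Lipschitz function in $\Z^d$ (with ``height'' coordinate $x_1$) whose graph lies entirely in good sites. This is analogous to the setting of Theorem \ref{stack}, except that the obstacles --- the bad sites --- are not i.i.d.\ and, as noted in the introduction, are not dominated by any product measure.

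The main step is therefore to control the tail of $L_m(x)-(2m+1)$, which vanishes exactly when $(x,2m+1)$ is good. The minimality of the family $(L_n)$ lets me realize it as a fixed point of an iteration in which closed sites and Lipschitz/stacking constraints successively force $L_n(x)$ upward. This iteration has the structure of first-passage percolation: each closed site acts as a source, and constraints propagate to neighboring columns and upward between surfaces at bounded speed. Applying Theorem \ref{FPP}, I replace the family of single-source FPP processes --- one for each closed site, with its own source time quantifying how much it disrupts the minimal surface --- by a single unified FPP model with source times located at the closed sites. For $p$ close to $1$ the passage times are heavy (most edges have infinite passage time), so occupation times at distant sites have exponentially decaying tails.

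Restricted to a fixed $1$D column $\{(x_1,u):x_1\in\Z\}$, the conclusion is that the bad-site set is contained in a union of random intervals centered at the closed sites in a ``zone of influence'', with i.i.d.\ geometric half-lengths of parameter $c=c(p)\to0$ as $p\to1$. Theorem \ref{1d} then dominates this $1$D bad set by i.i.d.\ site percolation on $\Z$ with parameter $\min(4\sqrt c,1)$, which can be made arbitrarily small by taking $p$ close to $1$. The existence of $H$ is then deduced by adapting the contour argument used to prove Theorem \ref{stack}: one counts candidate barriers separating a target site from infinity and bounds the probability that such a barrier is composed entirely of bad sites. The main obstacle is that bad sites in distinct columns are \emph{not} independent, because a single closed site can simultaneously influence many nearby columns. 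I expect this to be handled by combining the FPP tail (which makes the cross-column dependence exponentially weak) with the column-wise $\Geom$-based input to Theorem \ref{1d}, so that, even without a global product-measure domination, the Peierls sum converges once $p$ is close enough to $1$. This coupling of Theorems \ref{1d} and \ref{FPP} to produce a workable substitute for product-measure domination is the central technical point.
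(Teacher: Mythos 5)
Your high-level outline --- reindex $(u,x_1)$ so that \eqref{Hcond2} becomes the standard $1$-Lipschitz condition, dominate the bad-site configuration using Theorem~\ref{FPP}, feed the one-dimensional structure into Theorem~\ref{1d}, and then re-use the Lipschitz-surface existence argument --- is genuinely the shape of the paper's proof. You also correctly identify the FPP interpretation (closed sites as sources) and that Theorem~\ref{FPP} should be invoked to replace the dependent influences by independent ones. However, there is one genuine missing ingredient that your proposal does not supply and cannot avoid.

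After the FPP step (which the paper implements via the obstacle decomposition $\{\text{bad}\}=\bigcup_y A_y$ of Lemma~\ref{obstacles}, dominated by independent $\tilde A_y$ via Proposition~\ref{independentobstacleprop}), the independent obstacles $\tilde A_y$ are contained in \emph{$d$-dimensional balls} $B(y+e_d,R_y)$ with geometric radii (Lemma~\ref{obstacleradiuslemma}). The union of such balls is \emph{not} dominated by any i.i.d.\ percolation process when $d\geq 2$: the probability a ball of radius $r$ is closed decays like $e^{-cr^d}$, while a single $R_y$ of size $r$ (probability $\sim c^r$) produces a solid bad ball of volume $r^d$. This is exactly the obstruction the paper flags in the introduction. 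Your proposal acknowledges the cross-column dependence but hopes to beat it by a Peierls/contour estimate combining the FPP tail with the column-wise $\Geom$ input. That hope does not work as stated: a ball centered at $y$ intersects roughly $(2R_y)^{d-1}$ distinct columns, so a one-dimensional Theorem~\ref{1d} estimate in a single column is simply false for the bad set itself, and a naive Peierls count over $(d-1)$-dimensional barriers will not converge in the presence of these fat balls.

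The paper's resolution is the deterministic observation Lemma~\ref{sticklemma} (balls and sticks): a $1$-Lipschitz graph avoids $B(y,r)$ as soon as it avoids the one-dimensional vertical stick $S(y,2r-1)$ of twice the radius. This converts the union of $d$-dimensional balls into a union of sticks, each confined to a single column, so that the obstacle process restricted to each column is a genuinely independent family of geometric-length intervals, to which Theorem~\ref{1d} applies directly; the resulting per-column Bernoulli dominations are now independent across columns, yielding a bona fide i.i.d.\ product-measure domination on $\Z^d$. At that point no fresh contour argument is required: Theorem~\ref{stack} itself (with the roles of open and closed sites exchanged) supplies the Lipschitz function $H$. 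So your proposal is missing the key geometric lemma --- the reduction from balls to sticks --- and without it the dimensionality problem you flag is a real, not a technical, obstruction.
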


See Figure~\ref{all-2d}.  The set $\{(H(u),u):{u\in\Z^{d-2}\times\Zodd}\}$
forms a kind of Lipschitz surface perpendicular to the $1$ coordinate
direction.  (The ``${\leq}2$'' in \eqref{Hcond2} reflects the appearance of
$\Zodd$ in the domain of $H$.  Note that the $d-1$ coordinate of $u$ becomes
the $d$ coordinate of $(H(u),u)$.) It is relatively straightforward to check
that any functions $L_n$ and $H$ satisfying \eqref{Lcond1}--\eqref{Lcond4}
and \eqref{Hcond1}--\eqref{Hcond2} give rise to an embedding of $\comb^d$ in
the open set of $\Z^d_{[2]}$, as required for Theorem~\ref{pc}. This is
verified in Section \ref{sec:embed}; the function $H$ gives the backbone of
the comb, while the $L_n$'s give the fins. Therefore our main task is to
prove Theorems \ref{stack} and \ref{backbone}.

We will prove Theorem~\ref{stack} via an extension of the methods of
\cite{DDGHS}: the Lipschitz surfaces will be constructed as duals to paths of
a certain type, called $\Lambda$-paths.  Now, since the property
\eqref{Hcond2} required for $H$ is essentially property \eqref{Lcond2} of our
Lipschitz function $L_0$ (modulo a change of coordinate system), an appealing
idea is to try to deduce Theorem~\ref{backbone} from Theorem~\ref{stack}. The
problem, of course, is that the process of good sites is not i.i.d.  It is
also not dominated by any i.i.d\ process (because a vertical column of $k$
consecutive closed sites gives rise to a bad set with volume of order $k^d$).
Nonetheless, we will indeed deduce Theorem~\ref{backbone} from
Theorem~\ref{stack}, using stochastic domination in more subtle ways.

We will proceed by re-expressing the process of bad sites. For
each $y\in\Z^{d-1}\times\Zeven$ we will define a random finite
set $A_y$, called the obstacle at $y$, in such a way that
$$\{x:\text{$x$ is bad}\}=\bigcup_y A_y.$$
The field of obstacles will have the stationarity property that
$(A_y+z)_y$ is equal in law to $(A_{y+z})_y$ for all $z$.  The
obstacle at $y$ will be the set of points that can be reached
from $y$ by $\Lambda$-paths satisfying certain conditions.

The random sets $A_y$ will not be independent of each other
(since the paths used in their construction are shared between
different $y$'s). However, we will prove that they can be
replaced with independent sets in the following sense. Let
$(\tilde{A}_y)_{y}$ be mutually independent random sets, with
$\tilde{A}_y$ equal in law to $A_y$ for each $y$. We will show
\begin{equation}\label{obstacle-domination}
\bigcup_y A_y \text{ is stochastically dominated by }\bigcup_y \tilde{A}_y.
\end{equation}
A similar fact was proved in \cite{GH-lip} in the context of a
Lipschitz percolation model.  An analogous property for a
continuum percolation model was obtained in
\cite{BaccelliFoss}, and related ideas appeared earlier in
\cite{fontes-newman}. We will prove \eqref{obstacle-domination}
by expressing $A_y$ in terms of a first-passage percolation
model (via the paths involved in its definition), and appealing
to the much more general Theorem~\ref{FPP}.

Our task is now reduced to proving the existence of a Lipschitz
surface $H$ (as in \eqref{Hcond2}) avoiding a collection of
independent sets $\tilde{A}_y$.  The ideas behind the proof of
Theorem~\ref{stack} will easily show that the radius around $y$
of the random obstacle $A_y$ (and thus $\tilde{A}_y$) has
exponential tails for $p$ sufficiently close to $1$.  However,
for $d\geq 2$, this is not enough to allow domination of
$\bigcup_y \tilde{A}_y$ by an i.i.d.\ percolation process,
since there the probability of a closed ball of radius $r$
decays exponentially in $r^d$.

The final ingredient is a deterministic observation which
allows us to reduce to a one-dimensional process and hence
overcome the above dimensionality problem. Here it is important
that the object we seek is a Lipschitz surface.  For $x\in\Z^d$
and $r>0$ define the \df{ball} $B(x,r):=\{z\in\Z^d:
\|x-z\|_\infty<r\}$ and the one-dimensional \df{stick}
$S(x,r):=\{x+ae_d: a\in\Z \text{ and } |a|<r\}$.
\begin{lemma}[Balls and sticks]\label{sticklemma}
Suppose $h:\Z^{d-1}\to\Z$ is $1$-Lipschitz (i.e.\
$|h(x)-h(x')|\leq 1$ whenever $\|x-x'\|_\infty\leq 1$). If the
graph $\{(x,h(x)):x\in\Z^{d-1}\}$ does not intersect the stick
$S(y,2r-1)$ then it does not intersect the ball $B(y,r)$.
\end{lemma}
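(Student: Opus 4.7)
The plan is to prove the contrapositive: assume the graph of $h$ meets $B(y,r)$, and deduce that it meets $S(y,2r-1)$. Write $y=(y',y_d)$ with $y'\in\Z^{d-1}$, $y_d\in\Z$.

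Suppose $(x,h(x))\in B(y,r)$ for some $x\in\Z^{d-1}$. Then $\|x-y'\|_\infty<r$ and $|h(x)-y_d|<r$. Since all quantities are integers, these strict inequalities upgrade to $\|x-y'\|_\infty\leq r-1$ and $|h(x)-y_d|\leq r-1$. This integer rounding is the only subtle point, and it is where the factor $2r-1$ (rather than $2r$) comes from; I expect this to be the main (minor) pitfall in writing up the argument.

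Next, I would iterate the $1$-Lipschitz hypothesis along a sequence of $\infty$-norm unit steps from $x$ to $y'$. Concretely, one may choose $x=x^{(0)},x^{(1)},\dots,x^{(k)}=y'$ with $k=\|x-y'\|_\infty$ and $\|x^{(j)}-x^{(j-1)}\|_\infty=1$ (simply adjust one coordinate at a time, or all at once by a unit amount). The hypothesis gives $|h(x^{(j)})-h(x^{(j-1)})|\leq 1$, whence
\[
|h(y')-h(x)|\leq\|x-y'\|_\infty\leq r-1.
\]

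Finally, the triangle inequality yields
\[
|h(y')-y_d|\leq|h(y')-h(x)|+|h(x)-y_d|\leq (r-1)+(r-1)=2r-2<2r-1,
\]
so the point $(y',h(y'))$, which lies on the graph of $h$, belongs to the stick $S(y,2r-1)$. This contradicts the assumption that the graph misses the stick, completing the proof.
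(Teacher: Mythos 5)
Your proof is correct and follows essentially the same approach as the paper: prove the contrapositive, exploit integrality to upgrade the strict inequalities to $\leq r-1$, iterate the Lipschitz bound to get $|h(y')-h(x)|\leq r-1$, and conclude by the triangle inequality that $(y',h(y'))\in S(y,2r-1)$. The only minor differences are that you spell out the chain of unit steps for the Lipschitz iteration (which the paper takes as read) and that the paper explicitly disposes of the degenerate case $r\leq 1$ before starting, whereas your argument handles it implicitly since the ball is then empty.
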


\begin{figure}
\centering
\begin{tikzpicture}[thick,scale=0.35]
\draw[fill=blue!10,rounded corners] (4.7,-.3) rectangle (9.3,4.3);
\draw[fill=blue!30,rounded corners] (6.7,-2.3) rectangle (7.3,6.3);
\draw[rounded corners] (4.7,-.3) rectangle (9.3,4.3);
\draw[fill] (7,2) circle(.15); \draw
(1,3)--(2,2)--(7,7)--(8,7)--(9,6)--(11,6); \tikzstyle{every
node}=[shape=circle,fill=black,inner sep=0,minimum size=0.75mm]
\foreach \x in {1,...,11} \foreach \y in {-2,...,7} \node at
(\x,\y) {};
\end{tikzpicture}
\caption{A Lipschitz surface avoids a ball provided it avoids a stick.}
\label{stickpic}
\end{figure}

See Figure~\ref{stickpic} for an illustration.  Using
Lemma~\ref{sticklemma}, it suffices to construct a Lipschitz
surface that avoids a union of sticks $\bigcup_{x\in\Z^d}
S(x,G_x)$ with i.i.d\ geometric sizes $G_x$. This union
consists of independent one-dimensional processes in each
vertical line.  Therefore we can use Theorem~\ref{1d} to
dominate it by an i.i.d.\ percolation process (with parameter
that tends to $0$ as $p\to 1$), and deduce
Theorem~\ref{backbone} from Theorem~\ref{stack}, and hence
complete the proof of Theorem~\ref{pc}.

In the next four sections we carry out the steps outlined above
to prove Theorem~\ref{pc}.  The stacked surfaces $L_n$ are
constructed in Section~\ref{sec:stack}. Obstacles are defined
and dominated by independent sets in Section~\ref{sec:obs}, and
their radii are bounded in Section~\ref{radiussection}.  The
remaining details (including the stick argument) are completed
in Section~\ref{sec:embed}. Finally we prove the general
domination results, Theorems~\ref{1d} and \ref{FPP}, in
Sections~\ref{avoidingsection}~and~\ref{FPPsection}
respectively. The first-passage percolation result is proved
via dynamic coupling.  For the one-dimensional domination
result we employ a queueing interpretation.

\section{Stacked Lipschitz surfaces}
\label{sec:stack}

In this section we prove Theorem~\ref{stack}.  We first
construct the functions $L_n$, and then prove that they have
the required properties.  We sometimes refer to the positive
and negative senses of the $d$ coordinate as up and down
respectively, and the other coordinates as horizonal.

Define a \df{$\Lambda$-path} to be a sequence of sites $z(0),
z(1), \dots, z(m)\in\Z^d$ such that for each $i<m$,
\begin{equation}\label{Lambdapathdef}
z(i+1)-z(i)\in\{e_d\}\cup\Delta,
\end{equation}
where
$$\Delta:=
\biggl\{-e_d+\sum_{i=1}^{d-1}\alpha_ie_i:
(\alpha_1,\alpha_2,\dots,\alpha_{d-1})\in\{-1,0,1\}^{d-1}\biggr\}.
$$
That is, each step is up or down, but the down-steps may also
be diagonal; there are $3^{d-1}$ different types of down-step
since each of the first $d-1$ coordinates is allowed to remain
the same or change by 1 in either direction.  (Our definition
of a $\Lambda$-path differs slightly from that in \cite{DDGHS},
where only $2d+1$ types of down-step were allowed.  The
difference reflects our use of the $\infty$-norm in
\eqref{Lcond2}.) For an integer $r\geq0$, we call a
$\Lambda$-path \df{$r$-open} if its up-steps have distinct
locations, and at most $r$ of them end with an open site, i.e.\
among the indices $i<m$ for which $z(i+1)-z(i)=e_d$, the sites
$z(i+1)$ are all distinct, and at most $r$ of them are open. We
write $y\tor z$ if there is an $r$-open $\Lambda$-path from $y$
to $z$.

Now define the random set of sites $S_n$ by
\begin{equation}\label{Sndef}
S_n:=\bigl\{z: y\tor z \text{ for some }
r\text{ and some }y\text{ with }y_d=2(n-r)\bigr\}.
\end{equation}
Then let $L_n$ be the function whose graph lies just above
$S_n$:
\begin{equation}\label{Lndef}
L_n(x):=\min\bigl\{\ell\in \Z: (x,\ell)\not\in S_n\bigr\},
\end{equation}
(where $\min\emptyset:=\infty$).

\begin{proposition}\label{Lnlemma}
Let the functions $L_n$ be defined as above.  If $p$ is
sufficiently close to $1$, then a.s.\ $L_n(x)<\infty$ for all
$n$ and $x$, and the properties \eqref{Lcond1}--\eqref{Lcond4}
in Theorem~\ref{stack} all hold.
\end{proposition}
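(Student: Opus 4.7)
The plan is to verify the four structural properties \eqref{Lcond1}--\eqref{Lcond4} deterministically by short $\Lambda$-path extensions, and to deduce the almost-sure finiteness of $L_n(x)$ separately via a Peierls-type union bound (the only step where $p$ needs to be close to $1$).

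For the structural properties, each reduces to a short $\Lambda$-path extension. Property \eqref{Lcond3}: the one-vertex path at $(x, 2n)$ is a $0$-open witness for $(x, 2n) \in S_n$ with $y_d = 2(n - 0)$, and appending $-e_d$ down-steps shows $(x, \ell) \in S_n$ for every $\ell \leq 2n$. Property \eqref{Lcond2}: appending the down-step $-e_d + (x' - x) \in \Delta$ (valid for any $x'$ with $\|x - x'\|_\infty \leq 1$) to a witness $P$ for $(x, L_n(x) - 1) \in S_n$ preserves the $r$-open property (down-steps affect neither the set of up-step endings nor their openness) and yields $L_n(x') \geq L_n(x) - 1$; symmetry gives the $1$-Lipschitz bound. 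Property \eqref{Lcond1}: if $(x, L_n(x))$ were closed, then appending an up-step to a witness $P$ for $(x, L_n(x) - 1)$ would yield, when $(x, L_n(x))$ is a new up-step ending, an $r$-open witness for $(x, L_n(x)) \in S_n$ (the ending is closed, so the open count is unchanged), and otherwise the initial segment of $P$ up to a pre-existing up-step visit of $(x, L_n(x))$ already is such a witness; both outcomes contradict the minimality defining $L_n(x)$, so $(x, L_n(x))$ must be open. Property \eqref{Lcond4}: the same extension, now that $(x, L_n(x))$ is known open, produces either an $(r + 1)$-open path from $y$ with $y_d = 2(n - r) = 2((n + 1) - (r + 1))$ or an $r$-open initial-segment witness; either way $(x, L_n(x)) \in S_{n+1}$, so $L_{n+1}(x) > L_n(x)$, and reindexing gives \eqref{Lcond4}.

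For finiteness, any $r$-open $\Lambda$-path witnessing $(x, \ell) \in S_n$ has $u$ up-steps and $d$ down-steps with $u - d = \ell - 2(n - r)$, and hence at least $u - r = (m + \ell - 2n)/2$ closed sites among its $u$ distinct up-step endings, where $m = u + d$ is the total length. Reversing the path and choosing one of $1 + 3^{d-1}$ step types at each stage bounds the number of such paths of length $m$ ending at $(x, \ell)$ by $(1 + 3^{d-1})^m$. A union bound over $r \geq 0$, over $m \geq \ell - 2n + 2r$, and over the choice of path, combined with the factor $(1 - p)^{(m + \ell - 2n)/2}$ that the required sites be closed, shows that $\P((x, \ell) \in S_n)$ decays geometrically in $\ell - 2n$ once $p$ is close enough to $1$. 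Borel--Cantelli and a countable union over $(n, x)$ then give $L_n(x) < \infty$ almost surely everywhere.

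The chief obstacle is this Peierls step: one must organise the counting so that the geometric decay in $\ell - 2n$ is uniform in the parameter $r$ and in the path length $m$, and choose the threshold for $p$ accordingly. Once finiteness is secured, the four structural properties are purely combinatorial consequences of the $\Lambda$-path framework and the minimality defining $L_n(x)$.
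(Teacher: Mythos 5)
Your verification of the structural properties \eqref{Lcond1}--\eqref{Lcond4} matches the paper's argument in substance: each follows by a one-step extension or truncation of the $\Lambda$-paths witnessing membership in $S_n$, together with the downward closure of $S_n$ under appending $-e_d\in\Delta$. Two small points of care in your write-up of \eqref{Lcond4}: after establishing $(x,L_n(x))\in S_{n+1}$ you still need the downward closure of $S_{n+1}$ to conclude $L_{n+1}(x)>L_n(x)$ rather than merely $L_{n+1}(x)\neq L_n(x)$; and in the ``initial-segment'' subcase the $r$-open segment must be re-read as $(r+1)$-open so that $y_d=2(n-r)=2\bigl((n+1)-(r+1)\bigr)$, otherwise the source-height constraint for $S_{n+1}$ is not met.

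The genuine gap is in the Peierls count. For a fixed $\Lambda$-path $P$ of length $m$ ending at $(x,\ell)$ with $u$ up-steps, the event that $P$ is $r$-open is that at most $r$ of its $u$ distinct up-step endings are open, i.e.\ at least $u-r$ are closed. This has probability $\P\bigl(\mathrm{Bin}(u,1-p)\geq u-r\bigr)\leq\binom{u}{u-r}(1-p)^{u-r}$, not $(1-p)^{u-r}$: there is no fixed set of ``required'' closed sites, so a union bound over \emph{which} $u-r$ endings are closed contributes a factor $\binom{u}{r}$, which can be exponentially large in $m$. As stated, your bound $\sum_{r}\sum_{m}(1+3^{d-1})^m(1-p)^{(m+\ell-2n)/2}$ therefore does not dominate the expected number of witnesses. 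The fix is easy: bound $\binom{u}{r}\leq 2^u\leq 2^m$ and absorb the $2^m$ into the per-step count, i.e.\ work with $2(1+3^{d-1})$ in place of $1+3^{d-1}$; the geometric series still converges for $p$ close enough to $1$. The paper sidesteps the issue more cleanly by enumerating $\Lambda$-paths \emph{together with} an open/closed label on each up-step ending, so that each step carries a factor $K=3^{d-1}+2$ and the probability a labelled path matches the configuration is exactly $p^U q^C\leq q^C$, with no binomial coefficient. Apart from this, your Peierls scheme (geometric decay in $\ell-2n$, then a countable union over $(n,x)$) is the same as the paper's, which instead invokes stationarity to reduce to $L_0(0)$.
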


\begin{proof}
From the definition and the underlying stationarity of the
percolation process, the process $(L_n(x))_{n,x}$ is stationary
in the sense that $(L_{n+k}(x+y)-2k)_{n,x}$ has the same law
for any $k\in\ZZ$ and $y\in\ZZ^{d-1}$. Hence for the first
claim it is enough to show that $L_0(0)<\infty$ a.s.

In fact we will show that $L_0(0)$ has exponential tails. For
$h>0$, we have $\PP(L_0(0)>h)=\PP((0,h)\in S_0)$, and this is
at most the expected number of $r$-open $\Lambda$-paths from
the hyperplane $\Z^{d-1}\times \{-2r\}$ to $(0,h)$, summed over
all $r$. For such a path, let $C$ be the number of up-steps
that end in a closed site, let $U$ be the number of up-steps
that end in an open site, and let $D$ be the number of
down-steps (including diagonal steps). Since the path is from
$\Z^{d-1}\times \{-2r\}$ to $(0,h)$ we must have
$C+U-D=h-(-2r)$, i.e.\ $U=D-C+h+2r$. Since the path is $r$-open
we have $U\leq r$, or equivalently $A\geq 0$ where $A:=r-U$.

For given $U,C,D$, the number of ways to choose a
$\Lambda$-path ending at $(0,h)$ together with an assignment of
states open and closed to its up-steps is at most $K^{U+C+D}$,
where $K:=3^{d-1}+2$.  (There are $3^{d-1}$ possible directions
for a down-step, and two possible states for an up-step).  For
any such choice, the probability that the chosen states match
the percolation configuration is $p^U q^C \leq q^C$, where
$q:=1-p$.

Therefore
\begin{align*}
\P(L_0(0)>h)&\leq \sum_{\substack{U,C, D,r\geq 0: \\ C+U-D=h+2r,\\U\geq r}}
K^{U+C+D} q^C \\
&\leq \sum_{A,D,r\geq 0} K^{2D+h+2r} q^{D+h+r+A}\\
&= \big(Kq\big)^h \sum_{A\geq 0} q^A
\sum_{D\geq0}\big(K^2q\big)^D
\sum_{r\geq0}\big(K^2q\big)^r,
\end{align*}
which converges (exponentially fast) to $0$ as $h\to\infty$
whenever $q<K^{-2}$.  (For the second inequality above, we
rewrote $U$ and $C$ in terms of $A$ and dropped the conditions
$U\geq 0$ and $C\geq 0$.)

Now we verify properties (\ref{Lcond1})--(\ref{Lcond4}). For
(\ref{Lcond1}), observe that, for some $y,r$ as in the
definition of $S_n$, there is an $r$-open path to the site
$(x,L_n(x)-1)$, but there is none to the site $(x, L_n(x))$.
Thus the site $(x, L_n(x))$ must be open -- if it were closed,
the $r$-open path to $(x,L_n(x)-1)$ could be extended one step
upward (or else it already passed through that site).

Next, note that from the definition of $S_n$, if  $z\in S_n$
then $z+v \in S_n$  for all $v\in\Delta$. This gives the
Lipschitz property for $L_n$ as required for (\ref{Lcond2}).
For (\ref{Lcond3}) note that certainly $(x,2n)\in S_n$ for all
$x$ and $n$. Finally, if $z\in S_n$ then $z+e_d \in S_{n+1}$,
giving (\ref{Lcond4}).
\end{proof}

\begin{proof}[Proof of Theorem~\ref{stack}]
This is immediate from Proposition~\ref{Lnlemma} above.
\end{proof}

\section{Obstacles}
\label{sec:obs}

In this section we define obstacles, and show that they can be
dominated by independent versions.  Let the functions $L_n$ be
defined as in \eqref{Lndef}. As mentioned earlier, we say that
$$\text{site }(x,2n+1)\in\ZZ^{d-1}\times\Zodd
\text{ is \df{good} if } L_n(x)=2n+1,$$
and otherwise it is \df{bad}.  For $y\in\ZZ^{d-1}\times\Zeven$
we define the \df{obstacle at} $y$ to be
\begin{equation}\label{Aydef}
A_y:=\Bigl\{z\in\ZZ^{d-1}\times\Zodd:
y\xrightarrow{\frac{z_d-y_d-1}{2}}z\Bigr\}.
\end{equation}
Note that $A_y$ is defined only for $y$ of even height, while
it consists of a set of sites of odd heights.
\begin{lemma}\label{obstacles}
We have
$$\Bigl\{x\in \ZZ^{d-1}\times\Zodd: x\text{\rm\ is bad}\Bigr\} =
\bigcup_{y\in\ZZ^{d-1}\times\Zeven} A_y.
$$
\end{lemma}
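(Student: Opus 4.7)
The plan is to prove the lemma by unfolding definitions and chaining a short sequence of equivalences. Fix a site $x = (x', 2n+1) \in \Z^{d-1} \times \Zodd$ with $x' \in \Z^{d-1}$ and $n \in \Z$. Property \eqref{Lcond3} guarantees that $L_n(x') \geq 2n+1$ always, so ``$x$ is bad'' is equivalent to $L_n(x') \geq 2n+2$. By the definition \eqref{Lndef} of $L_n$ as the least height at which $(x', \cdot) \notin S_n$, this is in turn equivalent to $(x', 2n+1) \in S_n$.

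Next I would expand \eqref{Sndef}: the condition $(x', 2n+1) \in S_n$ holds precisely when there exist an integer $r \geq 0$ and a site $y \in \Z^d$ with $y_d = 2(n-r)$ such that $y \xrightarrow{r} (x', 2n+1)$. Any such $y$ has even last coordinate, so $y \in \Z^{d-1} \times \Zeven$; and the relations $y_d = 2(n-r)$ and $x_d = 2n+1$ together force $r = (x_d - y_d - 1)/2$. Comparing with the definition \eqref{Aydef} of $A_y$, this is exactly the statement that $x \in A_y$ for some $y \in \Z^{d-1} \times \Zeven$. Reading the chain of equivalences in both directions yields the two inclusions and hence the set equality asserted in the lemma.

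There is no substantive obstacle here: the lemma is essentially a consistency check between the definitions of $L_n$, $S_n$, and $A_y$, which have been arranged precisely so that $A_y$ collects the $r$-open $\Lambda$-paths emanating from the source height $y_d$. The only care required is in tracking parities---sources have even last coordinate, targets odd---and in observing that the implicit requirement $r \geq 0$ in \eqref{Aydef} is automatically in force, since the $r$-open notation is only defined for $r \geq 0$ and forces $z_d \geq y_d + 1$ whenever $r = (z_d - y_d - 1)/2$.
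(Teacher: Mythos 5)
Your proof is correct and follows essentially the same chain of equivalences as the paper's own proof (bad $\iff$ $(x',2n+1)\in S_n$ $\iff$ $x\in A_y$ for some $y$ of even height), just spelled out in more detail, in particular making explicit the role of \eqref{Lcond3} in pinning down $L_n(x')$ to the two possibilities $2n+1$ and $\geq 2n+2$. One very minor imprecision: the equivalence ``$L_n(x')\geq 2n+2\iff (x',2n+1)\in S_n$'' uses \eqref{Lcond3} (or the downward closure of $S_n$) in the backward direction, not solely the definition \eqref{Lndef}, but since you have already invoked \eqref{Lcond3} this does not affect correctness.
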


\begin{proof}
From \eqref{Sndef},\eqref{Lndef} it follows that
$z=(x,2n+1)\in\ZZ^{d-1}\times\Zodd$ is bad if and only if $z\in
S_n$, which in turn is equivalent to the existence of
$y\in\ZZ^{d-1}\times\Zeven$ such that $
y\xrightarrow{(z_d-y_d-1)/2}z. $
\end{proof}

Now let $(\tA_y)_{y\in\ZZ^{d-1}\times\Zeven}$ be mutually
independent random sets, with $\tA_y$ equal in law to $A_y$ for
each $y$.

\begin{proposition}
\label{independentobstacleprop} With the above definitions,
$\bigcup_y A_y$ is stochastically dominated by $\bigcup_y
\tA_y$.
\end{proposition}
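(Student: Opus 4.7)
The plan is to realise the obstacles as reachable sets in a first-passage percolation model on $\Z^d$ and then invoke Theorem~\ref{FPP} directly. Put a directed edge from $u$ to $v$ whenever $v-u\in\{e_d\}\cup\Delta$, and assign passage times by $W(u,v):=\omega(v)$ when $v-u=e_d$ (where $\omega(v):=1$ if $v$ is open and $0$ if $v$ is closed) and $W(u,v):=0$ otherwise. The crucial point is that among the edges entering any given vertex only one is an up-step, so the random passage times of distinct edges are functions of disjoint collections of site-states and are therefore independent, as required by Theorem~\ref{FPP}.

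For each $y\in\Z^{d-1}\times\Zeven$ let model $y$ have source times $t_y(y):=y_d/2$ and $t_y(x):=\infty$ for $x\neq y$. The total passage time accumulated along a $\Lambda$-path from $y$ to $z$ equals the number of its up-step endpoints that are open, so a path satisfies $t_y(y)+\sum W\leq (z_d-1)/2$ if and only if it is an $r$-open $\Lambda$-path from $y$ to $z$ with $r=(z_d-y_d-1)/2$, except possibly for the requirement that up-step destinations be distinct. That requirement may be arranged for free: if a path visits some up-step destination twice, splicing out the intervening subpath yields a shorter $\Lambda$-path with the same endpoints and with total passage time no larger, and iteration removes all repetitions. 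Writing $T$ for the occupation time in the single shared model with source times $t(x):=\inf_y t_y(x)$ (equal to $x_d/2$ on $\Z^{d-1}\times\Zeven$ and $\infty$ otherwise), one obtains for every $z\in\Z^{d-1}\times\Zodd$
\[
z\in\bigcup_y A_y \iff T(z)\leq (z_d-1)/2,
\]
and the analogous characterisation of $\bigcup_y\tilde A_y$ in terms of $\tilde T(z):=\inf_y T_y(z)$ holds when each $y$ uses an independent copy of the passage-time field.

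Given these characterisations, Theorem~\ref{FPP} supplies a coupling under which $\tilde T(z)\leq T(z)$ simultaneously for all $z$. Against the deterministic thresholds $(z_d-1)/2$ this yields $\bigcup_y A_y\subseteq\bigcup_y\tilde A_y$ almost surely in the coupling, which is the required stochastic domination. The main obstacle is the bookkeeping in the setup: the choice $t_y(y)=y_d/2$ is made precisely so that the $y$-dependent cutoff in the definition of $A_y$ collapses to a $y$-independent threshold, allowing the union over $y$ to be read off from a single FPP model, and the shortcut argument is needed to see that dropping the distinct-up-step-endpoint condition costs nothing. Once these are in place, the proposition follows immediately from Theorem~\ref{FPP}.
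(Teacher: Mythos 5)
Your proof is correct and takes essentially the same approach as the paper: the same FPP model with passage time $1$ for up-steps ending at open sites and $0$ otherwise, the same source times $t(y)=y_d/2$ on $\Z^{d-1}\times\Zeven$ (chosen so that the $y$-dependent cutoff in the definition of $A_y$ collapses to the $y$-free threshold $(z_d-1)/2$), and the same direct application of Theorem~\ref{FPP}. Your splicing argument for removing repeated up-step endpoints is a correct justification of the equivalence $z\in\bigcup_y A_y\iff T(z)\leq(z_d-1)/2$, a detail the paper asserts without comment.
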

\begin{proof}
We rephrase the definition of obstacles in terms of a
first-passage percolation model. Let each upward directed edge
$(z, z+e_d)$, $z\in\ZZ^d$  have passage time $1$ if $z+e_d$ is
open, and 0 if $z+e_d$ is closed.  Each downward directed edge
$(z, z+v)$, $v\in\Delta$ has passage time $0$.  All other edges
have passage time $\infty$. Note that all the passage times are
independent.  We assign source time $t(y)=y_d/2$ to each site
$y\in\ZZ^{d-1}\times\Zeven$, and source time $\infty$ to all
sites in $\ZZ^{d-1}\times\Zodd$.
From the definition of $r$-open paths, for
$z\in\ZZ^{d-1}\times\Zodd$, we have
\begin{equation}\label{bada}
z\in\bigcup_y A_y \iff T(z)\leq \frac{z_d-1}{2}.
\end{equation}

Now consider a countable family of models indexed by
$y\in\ZZ^{d-1}\times\Zeven$. All models have the same
distribution of passage times as described above, and are
independent of each other, but in model $y$, the only source is
$y$, with $t(y)=y_d/2$ (all other sites have source time
$\infty$).  Write $T_y(z)$ for the passage time to $z$ in model
$y$.
The set of sites $z\in\ZZ^{d-1}\times\Zodd$ with $T_y(z)\leq
(z_d-1)/2$ has the same law as $A_y$; let us define it to be
$\tA_y$. Thus the family $(\tA_y)$ has precisely the
distribution required. Writing $\tT(z):=\inf_y T_y(z)$ and
using \eqref{bada}, we have
\begin{equation}\label{badb}
z\in\bigcup_y \tA_y \iff \tT(z)\leq \frac{z_d-1}{2}.
\end{equation}

Theorem~\ref{FPP} tells us that $(\tT(z))$ is stochastically
dominated by  $(T(z))$. Using (\ref{bada}) and (\ref{badb}),
this implies that $\bigcup_y A_y$ is stochastically dominated
by $\bigcup_y \tA_y$, as required.
\end{proof}

\section{Radii of obstacles}\label{radiussection}

Let $R_y$ be the \df{radius} of the obstacle at $y$, by which
we mean the smallest $r$ such that $A_y\subseteq B(y,r)$
(recall that $B(y,r):=\{z\in\Z^d:\|y-z\|_\infty<r\}$). So
$R_y=0$ if and only if $R_y$ is empty.  Since all sites in
$A_y$ must have $d$ coordinate strictly greater than $y_d$, we
observe that
 $R_y$ is never equal to $1$, and also that $A_y\subseteq
 B(y+e_d,R_y)$.
Recall that our geometric random variables are supported on the
non-negative integers.

\begin{lemma}
\label{obstacleradiuslemma} If $p$ is sufficiently close to $1$
then for each $y\in\ZZ^{d-1}\times\Zeven$, the radius $R_y$ of
the obstacle at $y$ is stochastically dominated by a $\Geom(c)$
random variable, where $c=c(p)\to0$ as $p\to1$.
\end{lemma}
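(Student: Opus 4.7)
My plan is to mimic the counting argument used for Proposition~\ref{Lnlemma}, adapted to the event that $A_y$ extends far from $y$. By stationarity I may take $y=0$ with $y_d=0$. The event $\{R_y \geq h\}$ asks for the existence of some $r \geq 0$ and some $z$ with $z_d = 2r+1$, $\|z\|_\infty \geq h$, and an $r$-open $\Lambda$-path from $0$ to $z$. The key geometric observation is that the $d$-coordinate displacement along such a path is exactly $2r+1$, while each horizontal coordinate displacement is at most $D$ (the number of down-steps), so the requirement $\|z\|_\infty \geq h$ forces $\max(2r+1,\, D) \geq h$.

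I apply a union bound over $r$-open $\Lambda$-paths starting at $0$, grouped by the parameters $(U, C, D, r)$, where $U$ and $C$ count the up-steps ending respectively in open and closed sites. The $\Lambda$-path and $r$-openness constraints give $U+C-D = 2r+1$ and $U \leq r$, exactly as in Proposition~\ref{Lnlemma}, and the identical counting of typed step-sequences bounds the expected number of (path, matching percolation configuration) pairs with these parameters by $K^{U+C+D}\,q^C$, where $K = 3^{d-1}+2$. Substituting $A := r-U \geq 0$ gives $C = r + A + D + 1$ and $U+C+D = 2r + 2D + 1$, so the summand factorises as $Kq \cdot (K^2q)^r \cdot (K^2q)^D \cdot q^A$. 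For $p$ close enough to $1$ that $K^2q < 1$, splitting according to whether $2r+1 \geq h$ or $D \geq h$ produces three convergent geometric sums in each case and yields
\[
\P(R_y \geq h) \;\leq\; \frac{Kq}{(1-K^2q)^2(1-q)}\Bigl[(K^2q)^{\lceil(h-1)/2\rceil} + (K^2q)^h\Bigr].
\]
The dominant term decays as $(K\sqrt{q})^h$, so choosing $c := C'(p) \cdot K\sqrt{q}$ for a suitable constant $C'(p)$ gives $\P(R_y \geq h) \leq c^h$ for all $h$; since $c \to 0$ as $p \to 1$, this yields the required stochastic domination of $R_y$ by $\Geom(c)$.

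The main hurdle is handling the two independent mechanisms by which the radius can be large — large vertical extent (large $r$) and large horizontal extent (large $D$) — within a single union bound, and checking that both produce an exponential decay rate tending to $0$ as $p \to 1$. The horizontal case is essentially a routine variant of the $L_0(0)$ calculation in Proposition~\ref{Lnlemma}; the vertical case gives the slower rate $K\sqrt{q}$ and hinges on the observation that the restriction $r \geq (h-1)/2$ extracts an extra factor of order $(K^2q)^{h/2}$ from what would otherwise be a finite sum over $r$. This is what ultimately sets the leading decay constant.
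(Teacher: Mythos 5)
Your proposal is correct and takes essentially the same route as the paper's proof: a union bound over the expected number of $r$-open $\Lambda$-paths from $y$, with the same constraint extraction (your $U\leq r$ is exactly the paper's $C-U-D-1\geq 0$) and the same resulting decay rate of order $K\sqrt{q}$. The paper organises the summation by substituting $X:=C-U-D-1\geq0$ and $Y:=U+C-r\geq0$ (with $r$ the radius threshold there) so that the distance constraint is absorbed into a single triple geometric series, whereas you split on $\max(2r+1,D)\geq h$; this is merely a reorganisation of the same bound. Two minor slips worth repairing: since $B(y,r)$ is defined with a \emph{strict} inequality, $\{R_y\geq h\}$ is the event that some $z\in A_y$ lies at $\infty$-distance $\geq h-1$ (not $\geq h$) from $y$, so your exponents are off by one; and the final passage from a bound of the form $M a^h$ to stochastic domination by $\Geom(c)$ needs a short justification -- the paper uses $A a^r\leq\max(A,a)^{r+1}$ together with the observation that $R_y$ never equals $1$, whereas a ``suitable constant $C'(p)$'' cannot simply be folded in multiplicatively, since $M(K\sqrt q)^h\leq (C'K\sqrt q)^h$ for all $h\geq1$ forces $C'\geq1$, which is fine but should be said. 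Neither point affects the soundness of the approach.
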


\begin{proof}
Since as observed above, $R_y$ never takes the value $1$, it
will be enough to show that $\PP(R_y>r)<c^{r+1}$ for all $r\geq
1$. We use a path-counting argument similar to that already
used in the proof of Proposition~\ref{Lnlemma}.

Suppose that $R_y>r$.  Then by the definition of $A_y$ there
exists $z$ with $\|y-z\|_\infty\geq r$ and
$y\xrightarrow{(z_d-y_d-1)/2}z$.  Consider some
$(z_d-y_d-1)/2$-open $\Lambda$-path from $y$ to such a $z$, and
as before let it have $U$ up-steps ending in open sites, $C$
up-steps ending in closed sites, and $D$ (diagonal- or)
down-steps. Since the path is $(z_d-y_d-1)/2$-open we have
$$U\leq(z_d-y_d-1)/2=(U+C-D-1)/2,$$
and so $C-U-D-1\geq 0$.  Since $\|y-z\|_\infty\geq r$, either
$z_d-y_d\geq r$, in which case $U+C-D\geq r$, or else $z$ and
$y$ differ by at least $r$ in some other coordinate, in which
case $D\geq r$ (since only down-steps permit horizontal
movement).  Using the earlier inequality, in either case we
have $U+C-r\geq 0$.

As before, let $K:=3^{d-1}+2$ and $q:=1-p$.  Then, bounding via
the expected number of paths,
\begin{align*}
\P(R_y>r)&\leq
\sum_{\substack{U,C,D\geq 0: \\ C-U-D-1\geq 0 \\ U+C-r\geq 0}}
K^{U+C+D} q^C\\
&\leq \sum_{X,Y,D\geq 0}
K^{Y+D+r} q^{(X+Y+D+r+1)/2}\\
&= (K\surd q)^r\surd q \sum_{X\geq 0} \surd q^X
\sum_{Y\geq 0} (K\surd q)^Y \sum_{D\geq 0} (K\surd q)^D,
\end{align*}
where in the second inequality we wrote $X:=C-U-D-1$ and
$Y:=U+C-r$ and dropped the conditions $U,C\geq 0$.  The last
expression equals
$$\frac{\surd q}{(1-\surd q)(1-K\surd q)^2} \;
(K\surd q)^r=A a^r, \text{say,}$$ (where $A=A(K,q)$ and
$a=a(K,q)$ are defined by the last equality), provided $a<1$.
Finally, we have $A a^r\leq \max(A,a)^{r+1}$, and $\max(A,a)\to
0$ as $q\to 0$.
\end{proof}

\section{Completing the embedding}
\label{sec:embed}

In this section we conclude the proof of Theorem~\ref{pc} by
combining the various ingredients together with some geometric
arguments.  We start by proving Lemma~\ref{sticklemma}, which
states that balls may be replaced with sticks for the purposes
of finding a Lipschitz function that avoids them.
\begin{proof}[Proof of Lemma~\ref{sticklemma}]
For $z\in\Z^d$ we write $\widehat{z}:=(z_1,\ldots,z_{d-1})$, so
$z=(\widehat{z},z_d)$.  Let $r\geq 1$ (otherwise the ball and
stick in the lemma are both empty). Suppose that
$\{(x,h(x)):x\in \Z^{d-1}\}$ does intersect $B(y,r)$, say at
the site $u=(\widehat u,h(\widehat u))\in B(y,r)$.  Thus
$\|\widehat u-\widehat y\|_\infty\leq r-1$ and $|h(\widehat
u)-y_d|\leq r-1$.  By the Lipschitz property, the former
implies $|h(\widehat u)-h(\widehat y)|\leq r-1$. Therefore
$|h(\widehat y)-y_d|\leq 2r-2$.  Thus $(\widehat y,h(\widehat
y))\in S(y,2r-1)$, and $\{(x,h(x)):x\in\Z^{d-1}\}$ intersects
$S(y,2r-1)$.
\end{proof}

Next we check that the Lipschitz surfaces of
Theorems~\ref{stack} and \ref{backbone} can be combined to give
an embedding of the comb.  A slightly subtle point in
dimensions $d\geq 3$ is that the backbone surface $H$ will not
typically ``line up'' with the stacked surfaces $L_n$ with
respect to the intermediate coordinates $2,\ldots,d-1$.
Nevertheless, the use of the $\infty$-norm in the definitions
of $\Z^d_{[2]}$ gives enough wiggle room to permit an
embedding. Suppose we are given the functions $L_n$ and $H$.
For $z\in\ZZ^d$, define $x(z)\in \ZZ^{d-1}$ and $f(z)\in\ZZ^d$
by
\begin{gather}\label{xdef}
x(z):= \Bigl( z_1+H\bigl( (z_2,z_3,\ldots,z_{d-1}, 2z_d+1)
\bigr),z_2,\ldots,z_{d-1} \Bigr);
\\
 \label{fdef} f(z):=\Bigl( x(z), L_{2z_d+1}\big(x(z)\big)
\Bigr).
\end{gather}

\begin{lemma}\label{Hlemma}
Suppose that the functions $L_n$ and $H$ satisfy the conditions
of Theorems~\ref{stack} and \ref{backbone}.  Then the function
$f$ defined above is an embedding of the comb $\comb^d$ in
the open set of $\Z^d_{[2]}$.
\end{lemma}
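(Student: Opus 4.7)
The plan is to verify the three defining properties of an embedding of $\comb^d$ in the open set of $\Z^d_{[2]}$: that $f(z)$ is open for every $z$; that $f$ is injective; and that every edge of $\comb^d$ is mapped to a pair of sites at $\ell^\infty$-distance at most $2$.

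Openness is immediate from \eqref{Lcond1}, since $f(z)$ is by definition a point of the form $(x, L_n(x))$ on one of the stacked Lipschitz surfaces.

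Edge preservation I would split into the two edge types of $\comb^d$. For a fin edge $z \sim z + e_i$ with $1 \leq i \leq d-1$, the coordinate $z_d$ is unchanged, so both $f(z)$ and $f(z + e_i)$ use the same surface index, and the last entry of the argument of $H$ is unchanged. When $i = 1$ this argument is identical and only the first coordinate of $x$ shifts by one; when $i \geq 2$ exactly one of the first $d - 2$ entries of the argument changes by one, and \eqref{Hcond2} forces $|H(u') - H(u)| \leq 1$. Either way $\|x(z + e_i) - x(z)\|_\infty \leq 1$, and then \eqref{Lcond2} yields $\|f(z + e_i) - f(z)\|_\infty \leq 1$. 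For a backbone edge $z \sim z + e_d$ with $z_1 = 0$, the last entry of the argument of $H$ jumps by exactly $2$, which is precisely the slack permitted by \eqref{Hcond2}, so again $|H(u') - H(u)| \leq 1$; combined with $z_1$ being pinned at $0$, this gives $\|x(z + e_d) - x(z)\|_\infty \leq 1$. The surfaces at the two endpoints are now distinct, and for this case I would invoke \eqref{Hcond1}: the sites $(H(u), u)$ and $(H(u'), u')$ are good, which pins the relevant Lipschitz surfaces to their minimum height at these horizontal positions, and combining this with \eqref{Lcond2} bounds the vertical jump by $2$.

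Injectivity would be checked by recovering the coordinates of $z$ from $f(z)$ one at a time: the middle coordinates $z_2, \ldots, z_{d-1}$ appear unmodified in $f(z)$; the value of $z_d$ is then determined from the last coordinate of $f(z)$ because the ranges of the surfaces $L_n$ at distinct indices $n$ are disjoint by \eqref{Lcond3}--\eqref{Lcond4}; and finally $z_1$ can be read off as the first coordinate of $f(z)$ minus the now-known value of $H$.

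The main obstacle is the backbone step: the surfaces used at consecutive backbone vertices are not identical, and it is only the slack ``$\leq 2$'' in \eqref{Hcond2} together with the goodness guarantee \eqref{Hcond1} that keep the vertical jump within the tolerance afforded by $\Z^d_{[2]}$. Everything else reduces to routine Lipschitz bookkeeping, exploiting that the $\infty$-norm in \eqref{Lcond2} accommodates simultaneous unit changes in several horizontal coordinates (in particular both in a ``native'' coordinate of $z$ and in the $H$-driven shift of the first coordinate).
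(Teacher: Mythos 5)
Your proposal follows essentially the same route as the paper: openness from \eqref{Lcond1}, edge preservation split into fin edges (one-norm bookkeeping via \eqref{Hcond2} and \eqref{Lcond2}) and backbone edges (goodness via \eqref{Hcond1} pins both endpoints to the minimum heights $2z_d+1$ and $2z_d'+1$, giving an exact vertical gap of $2$), and injectivity by coordinate recovery. Two small points of imprecision are worth fixing. First, in the backbone case the reference to \eqref{Lcond2} is superfluous: once goodness gives $L_{2z_d+1}(x(z))=2z_d+1$ and likewise at $z'$, the vertical difference is exactly $2$ by arithmetic, with no Lipschitz bound needed. Second, and more substantively, your injectivity step asserts that the \emph{ranges} of the $L_n$ are disjoint across $n$; this is false (a column of closed sites can push $L_0$ up to a value that $L_1$ attains at its minimum elsewhere), and \eqref{Lcond3} does not remedy this since it only gives a lower bound. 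What \eqref{Lcond4} actually gives is that the \emph{graphs} $\{(x,L_n(x))\}$ are pairwise disjoint, equivalently that $n\mapsto L_n(x)$ is injective for each fixed $x$. Consequently $z_d$ is not recoverable from the last coordinate of $f(z)$ alone: you must first read off $x(z)$, which requires the first coordinate of $f(z)$ in addition to the middle ones, and only then does the last coordinate pin down $n=2z_d+1$, after which $z_1 = f(z)_1 - H(z_2,\ldots,z_{d-1},2z_d+1)$. With the recovery reordered in this way the argument is correct and matches the paper's, which phrases it contrapositively: if $z\neq z'$ then either $z_d\neq z_d'$, or else $z_d=z_d'$ forces $x(z)\neq x(z')$, and in either case the disjointness of the graphs gives $f(z)\neq f(z')$.
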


\begin{proof}[Proof of Lemma \ref{Hlemma}]
First observe that the site $f(z)$ is open for all $z$; this is
immediate from \eqref{fdef} and property \eqref{Lcond1} of
$L_n$.

We next check that $f$ is injective.  From \eqref{Lcond4}, the
sites $(x, L_n(x))$ and $(x', L_{n'}(x'))$ are distinct
whenever $x\ne x'$ or $n\ne n'$. Suppose $z\neq z'$.  If $z$
and $z'$ differ in any of the first $d-1$ coordinates then by
\eqref{xdef}, $x(z)\ne x'(z)$, while if they differ in the $d$
coordinate then $2z_d+1\neq 2z'_d+1$. Thus \eqref{fdef} gives
$f(z)\ne f(z')$, as required.

To show that $f$ is an embedding of the comb it remains to
check that
\begin{equation}\label{fcheck}
\|f(z)-f(z+e_i)\|_\infty\leq 2
\end{equation}
for all $z\in\Z^d$ and $i\leq d-1$, and also for $i=d$ whenever
$z_1=0$.  We first note the following key point. If $z_1=0$,
then
\begin{equation}\label{altfdef}
f(z)=\bigl(x(z), 2z_d+1\bigr).
\end{equation}
This is because, by (\ref{Hcond1}), for $u=(z_2, z_3, \dots,
z_{d-1}, 2z_d+1)$, the site $\big(H(u), u\big)$ is good, which
means that
\begin{align*}
L_{2z_d+1}(x(z))&=
L_{2z_d+1}(H(u), z_2, z_3, \dots, z_{d-1})
\\
&=2z_d+1,
\end{align*}
so that the two expressions in (\ref{fdef}) and (\ref{altfdef})
are the same.

We now verify that \eqref{fcheck} holds in the cases claimed.
First suppose that $z$ and $z'$ differ by $1$ in the $i$th
coordinate, where $i\leq d-1$, and that all the other
coordinates agree.  By the Lipschitz property \eqref{Hcond2} of
$H$, the first coordinates of $x(z)$ and $x(z')$ differ by at
most $1$, and clearly the same is true of the other
coordinates.  Hence by property \eqref{Lcond2}, we have
$|L_{2z_d+1}(x(z))-L_{2z_d+1}(x(z'))|\leq 1$. It follows that
$\|f(z)-f(z')\|_\infty\leq 1<2$.

Now suppose that $z$ and $z'$ differ by $1$ in the last
coordinate, and all the other coordinates agree, and suppose in
addition that $z_1=0$. Thus $2z_d+1$ and $2z'_d+1$ differ by
$2$, and by \eqref{altfdef}, $f(z)=(x(z), 2z_d+1)$ and
$f(z')=(x(z'), 2z'_d+1)$.  By \eqref{Hcond2} and \eqref{xdef},
the first coordinates of $x(z)$ and $x(z')$ differ by at most
$1$, and the other coordinates agree. Therefore
$\|f(z)-f(z')\|_\infty=2$ as required.
\end{proof}

We are ready to prove Theorem \ref{backbone} and deduce Theorem
\ref{pc}.

\begin{proof}[Proof of Theorem \ref{backbone}]
We need to show that if $p$ is sufficiently close to $1$ there
exists $H$ satisfying \eqref{Hcond1} and \eqref{Hcond2}.

Condition (\ref{Hcond1}) says that the surface $\{(H(u),u)\}$
must avoid every obstacle $A_y$, and by
Proposition~\ref{independentobstacleprop}, for this it suffices
to instead find a surface avoiding the independent obstacles
$\tA_y$. As remarked in Section~\ref{radiussection} we have
$A_y\subseteq B(y+e_d,R_y)$ (and $y+e_d\in
\Z^{d-1}\times\Zodd$), and by Lemma~\ref{obstacleradiuslemma},
$R_y$ is dominated by a geometric random variable whose
parameter $c=c(p)$ can be made as small as desired by taking
$p$ large enough. Therefore it remains to show that for $c$
sufficiently small there exists $H$ satisfying \eqref{Hcond2}
such that $\{(H(u),u):u\in \Z^{d-2}\times\Zodd\}$ avoids
$\bigcup_{y\in\Z^{d-1}\times\Zodd} B(y,G_y)$, where $(G_y)$ are
i.i.d.\ $\Geom(c)$.  Note that now all the relevant sites have
odd heights.

Now we map $\Z^{d-1}\times\Zodd$ to $\Z^d$ via the
transformation $(m,v,2n+1)\mapsto(v,n,m)$, for $v\in\Z^{d-2}$
and $m,n\in\Z$. It thus suffices to find a function
$h:\Z^{d-1}\to\Z$ satisfying the same $1$-Lipschitz condition
\eqref{Lcond2} as $L_0$, and whose graph
$\{(x,h(x)):x\in\Z^{d-1}\}$ avoids $\bigcup_{y\in\Z^d}
B(y,G_y)$ for $(G_y)$ i.i.d.\ $\Geom(c)$. (The transformation
does not increase $\infty$-norms).

Now we apply Lemma~\ref{sticklemma}.  The graph of $h$ will
avoid the balls $B(y,G_y)$ provided it avoids the sticks
$S(y,(2G_y-1)_+)$.  Observe also that if $G$ is $\Geom(c)$ then
$(2G-1)_+$ is dominated by a $\Geom(c')$ random variable, where
$c'=\surd c$, so it suffices to avoid
$\mathcal{S}:=\bigcup_{y\in\Z^d} S(y,G_y')$ where $(G_y')$ are
i.i.d.\ $\Geom(c')$.

The random set $\mathcal{S}$ consists of independent components
in each of the lines $\{x\}\times \Z$, for $x\in \Z^{d-1}$.
Within any such line, Theorem~\ref{1d} shows that it is
stochastically dominated by the open set of an i.i.d.\
percolation process with parameter $c''=4\surd c'$. Thus the
whole set $\mathcal{S}$ is dominated by the open set of an
i.i.d.\ percolation process with parameter $c''$ on $\Z^{d}$.
Hence it follows from Theorem~\ref{stack} (exchanging the roles
of open and closed sites) that there exists a function $h$
satisfying our requirements if $c''$ is sufficiently small.
Since $c''=4 \, c(p)^{1/4}$, this holds provided $p$ is
sufficiently close to $1$.
\end{proof}

\begin{proof}[Proof of Theorem~\ref{pc}]
This is immediate from Lemma~\ref{Hlemma} and
Theorems~\ref{stack} and \ref{backbone}.
\end{proof}

\begin{proof}[Proof of Corollary~\ref{large-m}]
Fix $k\geq 1$ and call the site $x\in\Z^d$ \df{occupied} if the
cube $kx+[0,k)^d$ contains some open site in the percolation
model.  For any graph $G$, if there exists an embedding of $G$
in the {\em occupied} sites of $\Z_{[m]}^d$ then there exists
an embedding of $G$ in the open sites of $\Z_{[km+k-1]}^d$: we
simply choose one open site from the cube of each occupied site
in the image. Therefore,
$$\Bigl[1-p_c(G,\Z_{[km+k-1]}^d)\Bigr]^{k^d} \geq
 1-p_c(G,\Z_{[m]}^d).$$
Setting $m=2$ and $G=\comb^d$, and using the fact that
$p_c(G,\Z_{[M]}^d)$ is decreasing in $M$, the result follows
from Theorem~\ref{pc}.
\end{proof}

\section{First-passage percolation domination}
\label{FPPsection}

In this section we prove Theorem~\ref{FPP}.  Recall that we
have a collection of models indexed by $i$, and an additional
model whose source times are given by infima of source times of
the others.  Write $W_i(e)$ and $W(e)$ for the passage time of
edge $e$ in model $i$ and in the additional model respectively.

\begin{proof}[Proof of Theorem~\ref{FPP}]
The argument is most straightforward in the case where the
vertex set $V$ and the index set $\mathcal{I}$ are finite, and
where a.s.\ the occupation times $T_i(x)$ are finite and
distinct for all $i$ and $x$. (This property holds, for
example, when all the source times are distinct and finite, and
each edge passage time is either $\infty$ or some positive
continuous random variable). We begin with this case, and then
extend to the general case by a limiting argument.

We will define the collection of passage times $(W(e))$ as a
function of the collection $(W_i(e))$, in such a way that
$W(e)$ shares the common distribution of the $W_i(e)$, that the
passage times $W(e)$ are independent for different $e$, and
that $\tT(x)\leq T(x)$ for all $x$. This explicit coupling
implies the stochastic domination required.  For a directed
edge $(x,y)$ we set
$$W(x,y):= W_i(x,y),\text { where $i$
minimizes }T_i(x).$$

First we aim to show that $\tT(x)\leq T(x)$.
We have
\[
T(x)=\min_{
\substack{y_0,y_1,\dots, y_m \\ y_m=x}}
\biggl\{
t(y_0) + \sum_{k=1}^{m} W(y_{k-1}, y_k)
\biggr\}.
\]
If $y_0,y_1,\dots, y_m$ is a minimizing path in the above
expression, then for all $r$ with $0\leq r\leq m$,
\[
T(y_{r})=
t(y_0)+\sum_{k=1}^{r} W(y_{k-1}, y_k),
\]
and in particular $T(y_{r})=T(y_{r-1})+W(y_{r-1}, y_r)$ for
$1\leq r\leq m$.

We will show by induction that $\tT(y_{r})\leq T(y_{r})$ for
all such $r$. For the $r=0$ case, we have
\[
T(y_0)=t(y_0)=\min_i t_i(y_0) \geq \min_i T_i(y_0) = \tT(y_0).
\]
Now suppose $\tT(y_{r-1})\leq T(y_{r-1})$. Let $i$ minimize
$T_i(y_{r-1})$, so that $W(y_{r-1}, y_r)=W_i(y_{r-1}, y_r)$,
and $T(y_r)\geq \tT(y_r)=T_i(y_r)$. Then
\begin{align*}
T(y_r)
&=T(y_{r-1})+W(y_{r-1}, y_r)\\
&\geq T_i(y_{r-1})+W_i(y_{r-1}, y_r)\\
&\geq T_i(y_r) \geq \tT(y_r),
\end{align*}
completing the induction.

It remains to show that the $W(e)$ as defined are indeed
independent with the required distributions.  We will do this
by giving a different construction of all the models.  The idea
is to run them simultaneously in real time, revealing the
random passage times only when they are needed.

First consider a single model $i$.  We begin by choosing all
the passage times $W_i(e)$, with the correct distributions, but
we do not yet reveal them.  (We can think of them as written on
cards associated with the edges, which will be turned over at
the appropriate times). \df{Label} each vertex with a time by
which it needs to be examined; initially these are just the
source times.  Now we repeatedly do the following.  Find the
vertex $x$ with the earliest (smallest) label among those that
have not yet been examined. Then \df{examine} $x$, which is to
say, reveal the passage times $W_i(x,y)$, $y\in V$ of all edges
leading out of $x$, and relabel each vertex $y\neq x$ with the
minimum of: its current label, and the label at $x$ plus
$W_i(x,y)$.  Repeat until all vertices have been examined. It
is clear that the vertices are examined in order of their
occupation times $T_i(x)$, and that when a vertex is examined
it is labeled with its occupation time (and this label does not
subsequently change). Our assumptions guarantee that these
times are all distinct, and so the choice of which vertex to
examine next is always unambiguous.  (These claims may be
checked formally by induction over the vertices in order of
their occupation times).

Now consider simultaneously running all the models
$i\in\mathcal{I}$ in the way just described.  We first choose
all the passage times independently, without revealing them. At
each step we examine the unexamined vertex with the earliest
label across all the models (and we examine it only in the
minimizing model). Clearly each individual model evolves
exactly as before (but with its steps interspersed with the
others).  Our assumptions guarantee that no two steps are
simultaneous. Finally we construct the passage times $W(e)$ of
the additional model: at each step, if the vertex that is
examined (say vertex $x$ in model $i$) is the first to be
examined among the copies of that vertex $x$ in all the models,
then we in addition set $W(x,y)=W_i(x,y)$ for all $y\in V$.
Since the label of vertex $x$ in model $i$ at this step is
$\min_i T_i(x)$ ($=\tT(x)$), this agrees with the earlier
definition of $W(x,y)$.  The key point is that the decision to
assign $W_i(x,y)$ to $W(x,y)$ is made before the value of
$W_i(x,y)$ is revealed.  It follows that the passage times
$(W(e))$ assigned to the additional model are independent and
have the correct distributions, as required.

To extend to the general case, we consider a sequence of
approximating finite systems of the kind just considered.
Without loss of generality, suppose that the index set $\cI$ is
contained in $\NN$. In the $n$th system in our sequence of
approximations, we take a finite vertex set $V^{(n)}$, such
that $V^{(n)}\uparrow V$ as $n\to\infty$. All source times and
passage times involving a vertex not in $V^{(n)}$ are set to
infinity. Any source time for a vertex in $V^{(n)}$ that was
previously be set to infinity is now instead given value $n$
(to ensure that every vertex in $V^{(n)}$ is reached in finite
time). Furthermore we consider only models indexed by
$i\in\{1,2,\dots,n\}$.  Finally we perturb the source times of
vertices in $V^{(n)}$, and the passage times of edges joining
points of $V^{(n)}$, by adding an independent Uniform$(0,\tfrac
1n)$ random variable to each. (This means that the source times
are no longer deterministic -- however, we can regard the
randomness as being on two levels: given any choices of the
source times, we have a set of models with random passage
times).  This ensures that a.s., the finite system satisfies
all of our earlier assumptions.

Write $T^{(n)}_i(x)$, $\tT^{(n)}(x)$ and $T^{(n)}(x)$ for the
passage times in the $n$th approximation. These quantities are
finite for any $x\in V_n$. But also, since any set of vertices
is eventually contained within $V_n$, and each model $i$ is
eventually included in the system, it follows that for any
finite set $A$, the random vectors $(\tT^{(n)}(x))_{x\in A}$
and $(T^{(n)}(x))_{x\in A}$ converge in distribution as
$n\to\infty$ to $(\tT(x))_{x\in A}$ and $(T(x))_{x\in A}$
respectively.  We know from the argument applied to the finite
case that $(\tT^{(n)}(x))_{x\in A}$ is stochastically dominated
by $(T^{(n)}(x))_{x\in A}$ for any $n$. Hence from the
convergence in distribution as $n\to\infty$, we obtain also
that in fact $(\tT(x))_{x\in A}$ is stochastically dominated by
$(T(x))_{x\in A}$.  Since this holds for any finite subset
$A\subseteq V$, it follows that in fact $(\tT(x))_{x\in V}$ is
stochastically dominated by $(T(x))_{x\in V}$, as desired.
\end{proof}

We remark that Theorem~\ref{FPP} may easily be extended for
example to models with undirected edges instead of (or in
addition to) directed edges, or with passage times at sites. As
long as all the passage times are independent, exactly the same
methods used above will continue to apply.

\section{Domination in one dimension}
\label{avoidingsection}

In this section we prove Theorem~\ref{1d}.  We start with the
following one-sided version.  The interval $[a,b)$ is taken to
be empty if $a=b$.

\begin{proposition}\label{1side}
For $c\in(0,1)$, let $(G_n)_{n\in\Z}$ be i.i.d.\ $\Geom(c)$
random variables. The random set
$\Z\cap\bigcup_{n\in\Z}[n,n+G_n)$ is stochastically dominated
by the open set of i.i.d.\ site percolation on $\Z$ with
parameter $\min(2c,1)$.
\end{proposition}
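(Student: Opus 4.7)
The plan is to introduce a queueing interpretation via the Lindley-type recursion
\[
R_m := \max(R_{m-1} - 1, G_m), \qquad m \in \Z.
\]
A direct calculation shows that $m \in \Z\cap\bigcup_n[n, n+G_n)$ if and only if $R_m \geq 1$, and that $(R_m)$ is a Markov chain on $\{0, 1, 2, \ldots\}$, stationary for $c<1$. The one-step transition satisfies
\[
\Pr(R_m \geq 1 \mid R_{m-1} = r) \;=\; \begin{cases} 1, & r \geq 2, \\ c, & r \leq 1, \end{cases}
\]
since $R_m\geq 1$ iff $R_{m-1}\geq 2$ or $G_m\geq 1$, and these events are independent given the past.

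Writing $U_m := \mathbf{1}[m\in U]$, I would aim to prove
\[
\Pr(U_m = 1 \mid \sigma(U_i : i<m)) \;\leq\; 2c \quad\text{a.s.}
\]
Given this, the standard step-by-step coupling (at each $m$ draw a fresh $\xi_m \sim \mathrm{Uniform}(0,1)$, set $U_m$ according to its conditional law using $\xi_m$, and put $V_m := \mathbf{1}[\xi_m \leq 2c]$) produces an iid $\mathrm{Bernoulli}(2c)$ process $V$ with $U_m\leq V_m$, which is exactly the required stochastic domination. The case $c>1/2$ is trivial, so we work under $c\leq 1/2$; here $c/(1-c)\leq 2c$ will be convenient. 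Using the Markov property, the target bound reduces to
\[
\Pr(R_{m-1}\geq 2 \mid U_{<m}) \;\leq\; \frac{c}{1-c} \quad\text{a.s.,}
\]
since $\Pr(U_m=1\mid U_{<m}) = c + (1-c)\Pr(R_{m-1}\geq 2\mid U_{<m})$.

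The heart of the argument is the memoryless property of the $\Geom(c)$ variables $G_n$. Let $\tau$ be the (a.s.\ unique) $n\leq m-1$ maximizing $n+G_n$; this is the latest arrival whose interval still "dominates'' at time $m-1$. The event $\{R_{m-1}\geq 1\}$ is the same as $\{G_\tau \geq m-\tau\}$, and by memorylessness the conditional distribution of $G_\tau - (m-\tau)$ given $G_\tau\geq m-\tau$ is again $\Geom(c)$; in particular, conditional on $\tau$ and on $U_{m-1}=1$, the event $R_{m-1}\geq 2$ has probability $c$. Averaging over $\tau$ against the conditional law induced by the observed history $U_{<m}$ then yields the required bound.

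The main obstacle is this averaging step: the history $U_{<m}$ only reveals \emph{which} sites lie in $U$, not by which arrival. A long run of $1$'s in $U_{<m}$ can arise either from one long interval or from several overlapping shorter ones, and these scenarios give different posterior laws for the dominant arrival $\tau$ and hence for $R_{m-1}$. Handling this mixture carefully — and showing that the "single long interval'' case (where memorylessness gives the clean value $c$) dominates the contributions from overlapping shorter intervals — is the delicate part of the proof, and is exactly where the queueing viewpoint is essential.
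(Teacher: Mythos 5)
Your framework is right in spirit --- a queueing interpretation plus a conditional bound $\PP(U_m=1\mid U_{<m})\leq 2c$ (which indeed gives the domination, via the step-by-step coupling you describe), and the Lindley-type recursion $R_m=\max(R_{m-1}-1,G_m)$ is a clean way to encode the residual occupation time. But the proposal is incomplete by your own admission, and the one concrete probabilistic claim made about the hard step is actually false.

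You assert that, conditional on $\tau$ (the argmax of $n+G_n$ over $n\leq m-1$) and on $U_{m-1}=1$, the event $R_{m-1}\geq 2$ has probability exactly $c$ by memorylessness. That is not so: conditioning on $\tau=k$ conditions on $G_k$ \emph{beating} $n+G_n$ for every other $n\leq m-1$, which size-biases $G_k$ upward. Concretely, freezing $(G_n)_{n\neq k}$ and writing $M:=\max_{n\neq k,\,n\leq m-1}(n+G_n)$, the event $\{\tau=k\}$ is $\{G_k>M-k\}$; given this together with $G_k\geq m-k$, memorylessness gives $G_k-\max(M-k+1,\,m-k)\sim\Geom(c)$, so $\PP(G_k\geq m-k+1)$ equals $c$ only when $M\leq m-1$ and equals $1$ when $M\geq m$. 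Averaging over the other $G$'s therefore produces something strictly larger than $c$ in general, and the further averaging over $\tau$ and $U_{<m}$ --- which you flag as ``the delicate part'' and do not carry out --- does not repair it.

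The paper conditions on a different random time: $N:=\max\{n\geq 0: G_{-n}\geq n\}$, the age of the oldest customer still present at time $0$, not the arrival maximizing the departure time $n+G_n$ (in your example $G_{-3}=3$, $G_{-1}=10$ these differ). The structural advantage of $N$ is that $\{N=n\}$ is the intersection of an event in $\sigma((G_j)_{j\geq-n})$ with an event in $\sigma((G_j)_{j<-n})$, so the two halves of the process remain conditionally independent given $N=n$. Moreover $N=n$ forces $B_{-n}=\cdots=B_{-1}=1$ and makes $B_0$ a function of $(G_j)_{j\geq-n}$ alone, whence $\PP(B_0=1\mid(B_j)_{j<0},N)=\PP(B_0=1\mid N)$. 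The only memorylessness then needed is the clean statement $\PP(G_{-n}>n\mid G_{-n}\geq n)=c$ --- with no argmax in the conditioning --- plus a union bound $\PP(G_{-j}>j\ \text{for some}\ 0\leq j<n)\leq c+\cdots+c^n$, giving $\PP(B_0=1\mid N=n)\leq 2c$ uniformly in $n$. To repair your argument you would need to replace the argmax $\tau$ by a cut-point like $N$ that genuinely decouples past from future and then prove the analogue of this conditional-independence identity; that is exactly the missing idea.
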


\begin{proof}
Define the indicator variable
\begin{align*}
B_i&:=\ind\bigl[i\in [n,n+G_n)\text{ for some } n\in\Z\bigr]
\\
&\;=\ind\bigl[G_n>i-n \text{ for some }n\leq i\bigr].
\end{align*}
Then we must prove that $(B_i)_{i\in Z}$ is dominated by an
i.i.d.\ Bernoulli($2c$) sequence (when $2c\leq 1$). For this,
it is enough to show that a.s.,
\begin{equation}\label{condeq}
\PP\big(B_i=1 \bigmid (B_j)_{j<i}\big)\leq 2c
\end{equation}
for all $i$ (see e.g.\ \cite[Lemma 1]{russo}). Since the
process $(B_i)$ is stationary it suffices to show
\eqref{condeq} for $i=0$.

We may think of the system as an $M/M/\infty$ queue in discrete
time. At each time $n$, a customer arrives whose service time
is $G_n$. The customer will depart at time $n+G_n$, and so
occupies the system during the interval $[n,n+G_n)$. (If
$G_n=0$ then the customer is never seen at all). Now $B_i$ is
the indicator of the event that there is some customer present
at time $i$.

We introduce the key random variable
$$N:=\max\{n\geq0 :G_{-n}\geq n\}.$$
We can think of $N$ as the age of the oldest customer who has
not left the system before time $0$. (For example, if $N=0$
then all the previous customers have left before time $0$;  on
this event we have $B_0=\ind[G_0>0]$, since the only customer
who could be present at time $0$ is the one arriving at time
$0$.)  The Borel-Cantelli lemma shows that $N$ is a.s.\ finite.

We claim that a.s.
\begin{equation}
\label{condclaim}
\PP\big(B_0=1\bigmid(B_j)_{j<0}, N\big)
=\PP\big(B_0=1 \bigmid N\big).
\end{equation}
To prove this, we must establish
\begin{equation}\label{condclaim_e}
\PP\big(B_0=1\bigmid E, \;N=n\big)
=\PP\big(B_0=1 \bigmid N=n\big),
\end{equation}
for all events $E\in\sigma((B_j)_{j<0})$ for which the
conditional probability on the left exists. Observe first that
$N=n$ forces $B_{-n}=\cdots=B_{-1}=1$, so it is enough to prove
\eqref{condclaim_e} for $E\in\sigma((B_j)_{j<-n})$.

To verify the above, observe that the two families
$\mathcal{L}:=(G_j)_{j<-n}$ and $\mathcal{R}:=(G_j)_{j\geq -n}$
are conditionally independent of each other given $N=n$: this
is because they are independent without the conditioning, while
$\{N=n\}$ is the intersection of an event in
$\sigma(\mathcal{L})$ and an event in $\sigma(\mathcal{R})$.
Now, $(B_j)_{j<-n}$ is a function of $\mathcal{L}$. On the
other hand, on $N=n$, we have that $B_0$ is a function of
$\mathcal{R}$ (since $N=n$ guarantees that any customer
arriving before time $-n$ has already left the system before
time 0). It follows that $(B_j)_{j<-n}$ and $B_0$ are
conditionally independent given $N=n$; this gives precisely
\eqref{condclaim_e} for $E\in\sigma((B_j)_{j<-n})$ as required,
thus proving the claim \eqref{condclaim}.

Returning to the proof of \eqref{condeq}, we have
\begin{align*}
\lefteqn{\PP(B_0=1\mid N=n)}
\\
\;\;&=\PP\bigl(G_{-j}>j \text{ for some } j\geq 0
\bigmid
G_{-n}\geq n, \text{ and } G_{-j}< j \text{ for all } j>n\bigr)
\\
&=\PP\bigl(G_{-j}>j \text{ for some } 0\leq j\leq n
\bigmid
G_{-n}\geq n\bigr)
\\
&=\PP(G_{-n}>n \mid G_{-n}\geq n)
\\
&\qquad+\bigl[1-\PP(G_{-n}>n \mid G_{-n}\geq n)\bigr]\;
\PP\bigl(G_{-j}>j \text{ for some } 0\leq j < n\bigr)
\\
&\leq c+(1-c)(c+c^2+\dots+c^n)\leq 2c.
\end{align*}
Combining with \eqref{condclaim}, we have shown that
$\PP(B_0=1\mid\big(B_j)_{j<0}, N)\leq 2c$ a.s.; then averaging
over $N$ gives (\ref{condeq}) (for $i=0$), as required.
\end{proof}

\begin{proof}[Proof of Theorem~\ref{1d}]
Let $(L_n)_{n\in\Z}$ and $(R_n)_{n\in\Z}$ be i.i.d.\
$\Geom(\surd c)$ random variables.  Let $G_n:=\min(L_n,R_n)$,
which is $\Geom(c)$.  Then, with all intervals understood to
denote their intersections with $\Z$,
$$\bigcup_{n\in\Z} (n-G_n,n+G_n) \subseteq
 \Bigl(\bigcup_{n\in\Z} (n-L_n,n]\Bigr) \cup
 \Bigl(\bigcup_{n\in\Z} [n,n+R_n)\Bigr).$$
By Proposition~\ref{1side} and symmetry, the right side is
dominated by the union of the open sets of two independent
percolation processes each of parameter $2\surd c$, which is
itself the open set of a percolation process of parameter
$1-(1-2\surd c)^2\leq 4\surd c$.
\end{proof}

\bibliographystyle{habbrv}
\bibliography{cp}

\begin{thebibliography}{10}

\bibitem{BaccelliFoss}
F.~Baccelli and S.~Foss.
\newblock Poisson hail on a hot ground.
\newblock {\em J. Appl. Probab.}, 48A:343--366, 2011, arXiv:1105.2070.

\bibitem{DDGHS}
N.~Dirr, P.~W. Dondl, G.~R. Grimmett, A.~E. Holroyd, and M.~Scheutzow.
\newblock Lipschitz percolation.
\newblock {\em Electron. Commun. Probab.}, 15:14--21, 2010, arXiv:0911.3384.

\bibitem{DDS}
N.~Dirr, P.~W. Dondl, and M.~Scheutzow.
\newblock Pinning of interfaces in random media.
\newblock 2009, arXiv:0911.4254.

\bibitem{fontes-newman}
L.~Fontes and C.~M. Newman.
\newblock First passage percolation for random colorings of {${\bf Z}^d$}.
\newblock {\em Ann. Appl. Probab.}, 3(3):746--762, 1993.

\bibitem{g2}
G.~R. Grimmett.
\newblock {\em Percolation}.
\newblock Springer-Verlag, Berlin, second edition, 1999.

\bibitem{GH-lip}
G.~R. Grimmett and A.~E. Holroyd.
\newblock Geometry of {L}ipschitz percolation.
\newblock {\em Ann. Inst. Henri Poincaré Probab. Stat}, arXiv:1007.3762.
\newblock To appear.

\bibitem{GH-embed}
G.~R. Grimmett and A.~E. Holroyd.
\newblock Lattice embeddings in percolation.
\newblock {\em Ann. Probab.}, arXiv:0911.3384.
\newblock To appear.

\bibitem{GH-sphere}
G.~R. Grimmett and A.~E. Holroyd.
\newblock Plaquettes, spheres, and entanglement.
\newblock {\em Electron. J. Probab.}, 15:1415--1428, 2010, arXiv:1002.2623.

\bibitem{liggett}
T.~M. Liggett.
\newblock {\em Interacting particle systems}, volume 276 of {\em Grundlehren
  der Mathematischen Wissenschaften [Fundamental Principles of Mathematical
  Sciences]}.
\newblock Springer-Verlag, New York, 1985.

\bibitem{LSS}
T.~M. Liggett, R.~H. Schonmann, and A.~M. Stacey.
\newblock Domination by product measures.
\newblock {\em Ann. Probab.}, 25(1):71--95, 1997.

\bibitem{lyons}
R.~Lyons.
\newblock Phase transitions on nonamenable graphs.
\newblock {\em J. Math. Phys.}, 41(3):1099--1126, 2000.
\newblock Probabilistic techniques in equilibrium and nonequilibrium
  statistical physics.

\bibitem{russo}
L.~Russo.
\newblock An approximate zero-one law.
\newblock {\em Z. Wahrsch. Verw. Gebiete}, 61(1):129--139, 1982.

\end{thebibliography}

\end{document}